\theoremstyle{definition}
\newtheorem{theorem}{Theorem}
\newtheorem{lemma}[theorem]{Lemma}
\newtheorem{corollary}[theorem]{Corollary}
\newtheorem{definition}[theorem]{Definition}
\newtheorem*{thm-fast}{Theorem~\ref{thm:fast-growth}}
\newtheorem*{thm-linear}{Theorem~\ref{thm:growth-by-one}}
\newtheorem*{thm-rootn}{Theorem~\ref{thm:growth-by-rootn}}
\title{Burning the plane: densities of the infinite Cartesian grid}
\author{Anthony Bonato}
\address{Department of Mathematics, Ryerson University, Toronto ON, Canada, M5B 2K3}
\email[Anthony Bonato]{abonato@ryerson.ca}
\thanks{AB and KG supported by NSERC Discovery grants.}
\author{Karen Gunderson}
\address[Karen Gunderson and Amy Shaw]{Department of Mathematics, University of Manitoba, Winnipeg MB, Canada, R3T 2N2}
\email[Karen Gunderson]{karen.gunderson@umanitoba.ca}
\author{Amy Shaw}
\email[Amy Shaw]{amy@amyshaw.email}
\begin{document}

\maketitle

\begin{abstract}
Graph burning is a discrete-time process on graphs, where vertices are sequentially burned, and burned vertices cause their neighbours to burn over time. We consider extremal properties of this process in the new setting where the underlying graph is also changing at each time-step.  The main focus is on the possible densities of burning vertices when the sequence of underlying graphs are growing grids in the Cartesian plane, centred at the origin.  If the grids are of height and width $2cn+1$ at time $n$, then all values in $\left [ \frac{1}{2c^2} , 1 \right ]$ are possible densities for the burned set.  For faster growing grids, we show that there is a threshold behaviour: if the size of the grids at time $n$ is $\omega(n^{3/2})$, then the density of burned vertices is always $0$, while if the grid sizes are $\Theta(n^{3/2})$, then positive densities are possible. Some extensions to lattices of arbitrary but fixed dimension are also considered.

\end{abstract}

\section{Introduction}\label{sec:intro}

Numerous recent works have analyzed the spread of social contagion in real-word networks. As an example, \cite{Kramer14} demonstrated that emotional states can be transferred to others on Facebook without direct interaction between people and in the complete absence of nonverbal cues. Graph burning is a new, discrete-time process that measures how prone a network is to fast social contagion. The input of the process is an undirected finite graph and at each step, vertices are either \emph{burning} (also called \emph{burned}) or not. Initially, a single vertex, called an \emph{activator}, is burning and in each subsequent round, every neighbour of a burning vertex becomes burned and a fire is ignited at a new activator which is also burning. The process is complete once all vertices are burning. In this paper, the focus is on the behaviour of the process when the sequence of activators is chosen deterministically to minimize the number of rounds.  The \emph{burning number} of a graph $G$ is the minimum number of rounds it takes for all vertices to be burned in $G$.  

Bonato et al.\ \cite{BJR14,BJR16,thez} first introduced the burning process, found bounds, and characterized the burning number for various graph classes. It was proved by Bessy et al.\ in \cite{BBJRR18} that for a connected graph of order $n$, the burning number is at most $2\lceil\sqrt{n}\rceil-1$, and this bound was improved by Land and Lu \cite{LL} to $\frac{\sqrt{6}}{2} \sqrt{n}$. In \cite{BBJRR18}, Bessy et al.\ conjectured that the burning number of a connected graph of order $n$ is at most $\lceil \sqrt{n} \rceil$, and this was shown to hold for spider graphs in \cite{BL} by Bonato and Lidbetter. Mitsche, Pra{\l}at, and Roshanbin \cite{rburn} considered randomized burning on the path and Fitzpatrick and Wilm \cite{shannon} considered burning for circulants. It was proved by Bessy et al.\ in \cite{BBJRR17} that it is \textbf{NP}-hard to determine the burning number even in elementary graph families such as trees with maximum degree three, spider graphs, and forests consisting of disjoint unions of paths. Approximation algorithms for graph burning were given by Bonato and Kamali \cite{BK}.

The process of graph burning finds a place among several models that measure the spread of a fire or contagion against time. One process in which a fire spreads to neighbours of burning vertices, is firefighting on graphs. Introduced by Hartnell \cite{hartnell} in 1995, the goal of firefighting is to extinguish or limit the growth of a `fire' that breaks out at one or several vertices in a graph by protecting a limited number of vertices at each time step. See Finbow and MacGillivray \cite{finbow} for a survey of firefighting results.
A notion of density for firefighting, called the surviving rate, was introduced by Cai and Wang \cite{CW}.

Another model measuring the spread of an activation or `infection' in a graph is the $r$-\emph{neighbour bootstrap process}, a discrete-time process where a vertex becomes infected when at least $r$ of its neighbours are.  Recently, extremal questions regarding the time to full infection have been considered by Benevides and Przykucki~\cite{benevides1,benevides2} and Przykucki \cite{przykucki}.

There are a number of further models for a randomized spread of infection in a graph.  These models often arise in the context of diffusion or rumour-spreading in social networks. These include Hammersley and Welsh's first-passage percolation model~\cite{HW65}, Richardson's model for the spread of disease~\cite{dR73}, Harris's contact process~\cite{tH74}, diffusion models given by Granovetter~\cite{granovetter} and Schelling~\cite{schelling}, and randomized push\&pull gossip algorithms given by Boyd, Ghosh, Prabhakar, and Shah~\cite{BGPS06}.

In this paper, we consider a variant of the burning process, where the underlying graph is also growing at each step.  In the case that the underlying graph becomes arbitrarily large, it need not be the case that there is ever a particular step in the burning process where all vertices of the current graph are burned.  We may ask instead about the behaviour of the \emph{proportion} of burning vertices. The precise statement of this new burning process is given below in Definition~\ref{def:burn-seq}.  Included in the definition is the possibility that no new vertex is activated at a particular time step.  Having the ability to `skip a chance' to activate a new vertex is particularly useful for analysis of an infinite sequence of growing graphs in which burning sets in sequences of disjoint subgraphs are considered separately.

\begin{definition}\label{def:burn-seq}
Let $V$ be an infinite set and let $\underline{G} = (G_0, G_1, \dots)$ be a sequence of graphs with the property that for every $n \geq 1$, $V(G_n) \subset V$ and $G_{n-1}$ is an induced subgraph of $G_n$.  Let $\underline{v} \in \prod_{n \geq 0} (V(G_n) \cup \{\bullet\})$.  The sequence $\underline{v}$ is the sequence of activator vertices. The notation $v_n = \bullet$ indicates that no vertex was activated.    Define the sequences of burning sets by $B_0 = \{v_0\},$ and for every $n \geq 0$, define
\[
B_{n+1} =
\begin{cases}
	N_{G_{n+1}}[B_n]	&\text{if $v_{n+1} = \bullet$},\\
	N_{G_{n+1}}[B_n] \cup \{v_{n+1}\}			&\text{otherwise}.
\end{cases}
\]
The sequence $\underline{v}$ is \emph{valid} if for all $n \geq 0$ with $v_{n+1} \neq \bullet$, $v_{n+1} \in V(G_{n+1}) \setminus N_{G_{n+1}}[B_{n}]$.  In a valid sequence of activated vertices, at each time step, the newly activated vertex is not already burned by a vertex from the previous time step.

The proportion of vertices burning at time $n$ is then $|B_n|/|V(G_n)|$ and the \emph{burning density} of a sequence $\underline{v}$ in $\underline{G}$ is defined to be
\[
\delta(\underline{G},\underline{v}) = \lim_{n \to \infty} \frac{|B_n|}{|V(G_n)|},
\]
if this limit exists.
As the limit above need not exist, the \emph{lower burning density} is defined to be
\[
\underline{\delta} (\underline{G},\underline{v}) = \liminf_{n \to \infty} \frac{|B_n|}{|V(G_n)|} .
\]
Similarly, the \emph{upper burning density} is defined to be
\[
\overline{\delta} (\underline{G},\underline{v}) = \limsup_{n \to \infty} \frac{|B_n|}{|V(G_n)|} .
\]
\end{definition}

Note that in the traditional version of the burning process, a new unburned vertex must be activated at each step.  The possibility of having $v_n = \bullet$ in a burning sequence corresponds to either allowing a `pass' for vertex activation, or allowing the possibility of activating an already-burning vertex.

The main focus here is on graphs that are square grids in the integer lattice.  Unless otherwise specified, $[a, b]\times [c, d]$ is used to denote the graph with vertex set $[a, b] \times [c,d]$ and with vertices at $L_1$-distance $1$ joined by an edge.  The corresponding notation is used for the $d$-dimensional lattice graphs.

Let $\mathbb{N}_0$ denote the natural numbers including $0$. The classes of sequences of grids studied here are of the following form: Given an increasing function $f: \mathbb{N}_0 \to \mathbb{N}_0$, consider the sequence of graphs given by $(\left[-f(n),f(n) \right]^2)_{n \geq 0}$.  The main question is: Given the function $f$, which real numbers are achievable as the burning density of some sequence of activated vertices that contains no $\bullet$; that is, no skipped step?  Also, for which functions is the burning density always $0$ and for which is a positive density possible?

In the case that $f(n)$ grows linearly, Theorem~\ref{thm:growth-by-c} gives an interval of all achievable burning densities.

\begin{theorem}\label{thm:growth-by-c}
Let $c\geq 1$ and let $\underline{S}$ be the sequence of square grids with $S_n = [-\lceil cn \rceil, \lceil cn \rceil]^2$. For each $\rho \in [\frac{1}{2c^2},1]$, there is a valid sequence of activated vertices $\underline{v}=(v_1,v_2,\dots) \in \prod_{n \geq 0} V(S_n)$ such that the burning density in $\underline{S}$ is $\delta (\underline{S},\underline{v})=\rho$.
\end{theorem}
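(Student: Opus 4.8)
The plan is to reduce the theorem to the following statement: for every $\lambda\in[1,2c]$ there is a valid, $\bullet$-free sequence $\underline v$ whose burned sets satisfy $|B_n| = |D_n| + o(n^2)$, where $D_n := B_{L_1}(\mathbf{0}, \lfloor \lambda n\rfloor)\cap V(S_n)$. A direct count gives $|D_n|/|V(S_n)| \to \tfrac{\lambda^2}{2c^2}$ for $\lambda \in [1,c]$ (no clipping occurs, and an $L_1$‑ball of radius $R$ has $2R^2+O(R)$ vertices) and $|D_n|/|V(S_n)|\to 1 - \tfrac{(2c-\lambda)^2}{2c^2}$ for $\lambda\in[c,2c]$ (the square minus four corner triangles of leg $\approx(2c-\lambda)n$). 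This is continuous and strictly increasing in $\lambda$, with value $\tfrac{1}{2c^2}$ at $\lambda = 1$ and $1$ at $\lambda = 2c$, so every $\rho\in[\tfrac{1}{2c^2},1]$ arises from a suitable $\lambda$ and then $\delta(\underline S,\underline v) = \rho$.

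For $\lambda = 1$ one uses the \emph{hiding} construction: set $v_0 = \mathbf{0}$ and for $k\ge 1$ let $v_k$ be a vertex of $V(S_k)\setminus N_{S_k}[B_{k-1}]$ at $L_1$‑distance $k + o(k)$ from $\mathbf{0}$. Such a vertex exists because the bulk of $B_{k-1}$ is $B_{L_1}(\mathbf{0},k-1)$ and a short induction shows the remaining activators push the burned set out to $L_1$‑distance only $k+o(k)$ by time $k$, leaving an unburned vertex just beyond. Since each $v_k$ lies within $L_1$‑distance $k+o(k)$ of $\mathbf{0}$, its spread at time $n$ reaches $L_1$‑distance $n + o(n)$, so $B_{L_1}(\mathbf{0}, n)\subseteq B_n\subseteq B_{L_1}(\mathbf{0}, n + o(n))$ and the density is $\tfrac{1}{2c^2}$.

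The main case is $\lambda\in(1,2c]$. Keep $v_0 = \mathbf{0}$; for $k\ge 1$ put $v_k := u_k$ where $u_k$ lies on the frontier of $D_k$ (so $|u_k|_1 = \lfloor\lambda k\rfloor$ when that $L_1$‑sphere meets $S_k$, and otherwise on the appropriate side‑piece of $\partial S_k$), with the ``angular'' position of $u_k$ along the frontier chosen so that over every long interval of consecutive times these positions are nearly equidistributed, e.g.\ via an irrational rotation. Validity is immediate: since $\lambda > 1$, at time $k$ the spread of $v_0$ has $L_1$‑radius $k < \lfloor\lambda k\rfloor$ and the spread of each earlier $u_t$ has $L_1$‑radius at most $|u_t|_1 + (k-t) \le \lambda t + (k-t) < \lfloor\lambda k\rfloor$, so the entire frontier $\partial D_k$ is still unburned at time $k$. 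The containment $B_n\subseteq D_n$ is similarly immediate, since $|u_t|_1 \le \lambda t$ and $\lambda\ge 1$ give $B_{L_1}(u_t, n-t)\subseteq B_{L_1}(\mathbf{0}, \lambda t+(n-t))\subseteq B_{L_1}(\mathbf{0}, \lambda n)$, and all burned sets lie in $S_n$. The crux is the reverse containment $B_n\supseteq B_{L_1}(\mathbf{0},\lfloor\lambda n\rfloor - T_n)\cap V(S_n)$ with $T_n = O(\sqrt n)$: fixing a vertex $p$ with $|p|_1 = r \le \lfloor\lambda n\rfloor - T_n$, the only activators that could reach $p$ by time $n$ are those $u_t$ with $\big|\,|u_t|_1 - r\,\big|\le n-t$, which form an interval of $\Theta(\lambda n - r)$ consecutive values of $t$; each such $u_t$ covers an arc of the ``circle'' $\{\,|q|_1 = r\,\}$ of length of order $(n-t) - \big|\,\lambda t - r\,\big|$, and summing these arc lengths over the interval while using equidistribution of the angular positions shows the arcs cover the whole circle once $(\lambda n - r)^2 \gtrsim n$, i.e.\ for $r\le \lfloor\lambda n\rfloor - C_\lambda\sqrt n$. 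Hence only an outer shell of width $O(\sqrt n)$ — so $O(n^{3/2}) = o(n^2)$ vertices — can fail to burn, giving $|B_n| = |D_n| + o(n^2)$.

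I expect the arc‑counting estimate above to be the main obstacle, above all making it uniform in the clipped regime $c < \lambda\le 2c$. There $\partial D_k$ consists of flat pieces along the sides of $S_k$ together with cut‑corner pieces lying on the $L_1$‑sphere of radius $\lfloor\lambda k\rfloor$, and one must distribute activators along both kinds of pieces and argue separately that the side‑activators fill the near‑side part of $D_n$ and the corner‑activators the cut‑corner part, each up to an $O(\sqrt n)$ shell; the corners of $S_n$ are the extreme case, reachable only by activators introduced within $O(\sqrt n)$ steps of time $n$. Carrying the $\Theta(\sqrt n)$ bookkeeping through all the pieces — and confirming that in the $\lambda = 1$ case the nearest unburned vertex genuinely stays within $L_1$‑distance $k + o(k)$ of $\mathbf{0}$ — is where the technical work lies.
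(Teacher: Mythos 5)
Your architecture is genuinely different from the paper's (which goes via a quadrant decomposition for $c=1$ plus a separate density-$1$ sweep construction and a scaling trick), and the skeleton --- activators equidistributed along the frontier of a linearly growing clipped diamond $D_k$, with the density read off from $|D_n|/|V(S_n)|$ --- is plausible and close in spirit to the paper's spiral construction for Theorem~\ref{thm:growth-by-rootn}. But as written there are two genuine gaps, both at the places you yourself flag. The more serious is the $\lambda=1$ case: the assertion that ``a short induction shows the remaining activators push the burned set out to $L_1$-distance only $k+o(k)$'' is circular as stated. The naive induction only gives that the excess can grow by a constant per step (each activator is placed just outside the current burned set, whose radius already carries the accumulated excess), i.e.\ excess $O(n)$, which is useless; and a crude count does not rule out the bumps covering the whole sphere of radius $k+1$, since the total size of the ball--sphere intersections over all earlier activators is of order $k^2$, far exceeding the sphere's size $\Theta(k)$. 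What saves the day is a \emph{structured} choice of activator --- in the paper's Lemma~\ref{lem:density0}, the nearest unburned vertex with greatest $y$-coordinate, which confines the overspill to a single diagonal whose filling time doubles, yielding excess $O(\log n)$. Some argument of this kind must be supplied; ``a vertex at distance $k+o(k)$ exists'' cannot simply be assumed.

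The second gap is the covering estimate in the main case. Your heuristic ($\Theta(\lambda n-r)$ admissible times, tent-shaped arc lengths, total $\Theta((\lambda n-r)^2)$ versus circumference $\Theta(n)$) gives the right threshold, but covering needs more than total length: you need a quantitative gap bound for the angular positions over every window of $\Theta(\sqrt n)$ consecutive times (e.g.\ a badly approximable rotation together with the three-distance theorem), and you must handle the drift caused by the frontier of $D_t$ growing by $\Theta(1)$ per step across the window, which is of the same order as the tolerance. In the clipped regime $c<\lambda\le 2c$ there is an additional structural constraint you understate: since the square grows at speed $c>1$ while the fire spreads at speed $1$, a point at depth $\delta$ from a side of $S_n$ can only be burned by activators from the last $O(\delta/(c-1))$ steps that happen to land on the correct side piece near the correct position --- so the near-side covering argument is not a corner anomaly but governs the entire flat part of the frontier, and it is exactly where the paper's own proof of Theorem~\ref{thm:growth-by-c-density1} needs its explicit $\lceil\sqrt t\rceil$-spacing sweep and ends up with $N^{2/3}$-wide uncovered strips. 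Your equidistribution scheme can plausibly be pushed through (and may even give an $O(\sqrt n)$ shell), but none of these estimates is carried out, and they constitute the real content of the theorem. A minor further point: validity is not ``immediate,'' since for $\lambda$ close to $1$ activators from the last $O(1/(\lambda-1))$ steps can have spread reaching the sphere of radius $\lfloor\lambda k\rfloor$ at time $k$, so the prescribed $u_k$ may need to be perturbed off a burned arc; this is easily repaired but should be said.
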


The proof of Theorem~\ref{thm:growth-by-c} is given in Section~\ref{sec:growth-by-one}.  The proof proceeds by first giving a proof for the case $c = 1$ that involves considering a different sequence of graphs: $[0, n]^2$, determining the achievable burning densities in these, and then combining burning sequences for each of the four quadrants to achieve a given density in the larger grid.  The case $c \geq 1$ and $\rho = 1$ is then given in Theorem~\ref{thm:growth-by-c-density1}.  These results are combined to prove Theorem~\ref{thm:growth-by-c}.

Note that nothing is lost by restricting our attention to valid sequences of activated vertices. In the case $c = 1$, unless $\underline{v} = ( \bullet )_{i=1}^\infty$, then $\underline{\delta} ( \underline{S} , \underline{v} ) \geq 1/2$. Indeed, if the first vertex $u$ is activated at time $k$, then $B_n$ contains every vertex at distance at most $n-k$ from $u$, so that
\[
\underline{\delta} ( \underline{S} , \underline{v} ) \geq \lim_{n \to \infty} \frac{2(n-k)^2 + 2(n-k) + 1}{(2n+1)^2} = \frac{1}{2} .
\]

 For faster-growing grids, Theorem~\ref{thm:growth-by-rootn} shows that if there is a constant $c$ with $f(n)=\lceil c n^{3/2} \rceil$, then a positive lower burning density is possible, while Corollary~\ref{cor:fast-growth2} shows that in contrast, when $f(n)= \omega (n^{3/2})$, the burning density is always $0$.

\begin{theorem}\label{thm:growth-by-rootn}
For any positive constant $c$, let $\underline{S}$ be the sequence of square grids such that $S_n = \left [ - \lceil cn^{3/2} \rceil, \lceil cn^{3/2} \rceil \right ]^2$. Then there is a sequence of activated vertices $\underline{v}$ with lower burning density $\underline{\delta} (\underline{S},\underline{v}) >0$.
\end{theorem}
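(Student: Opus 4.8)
The plan is to build a burning sequence that never skips a step, activating one new vertex at each time $n$, placed far enough from all previously burned regions that the fires grow independently like expanding $L_1$-balls (diamonds). If a fire is lit at time $k$, by time $n$ it covers roughly $2(n-k)^2$ vertices, so a single fire contributes a vanishing density since $|V(S_n)| \sim 4c^2 n^3$. The key idea is therefore to have many fires burning simultaneously: if fires are lit at times $k_1 < k_2 < \cdots$, then the burned set at time $n$ has size on the order of $\sum_{i : k_i \le n} (n - k_i)^2$, and we want to choose the $k_i$ so that this sum is $\Theta(n^3)$. Taking $k_i$ to be a sufficiently sparse but still positive-density subset of $\mathbb{N}$ — for instance, activating a new fire at \emph{every} time step, $k_i = i$ — gives $\sum_{i=1}^{n} (n-i)^2 = \Theta(n^3)$, which is exactly the right order of magnitude to produce a positive lower density. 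So at a heuristic level the result is clear; the work is in making the fires genuinely disjoint and handling the boundary of the finite grids.

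First I would fix a concrete placement scheme for the activators inside $S_n = [-\lceil cn^{3/2}\rceil, \lceil cn^{3/2}\rceil]^2$. Since the grid grows by $\Theta(n^{3/2} - (n-1)^{3/2}) = \Theta(n^{1/2})$ in each direction per step, there is a growing annulus of new vertices available at each time. I would place the $n$-th activator $v_n$ somewhere in this fresh annular region, at $L_1$-distance more than, say, $2n$ from every previously activated vertex and from the previous grid $S_{n-1}$; a counting argument shows the annulus is large enough to accommodate this for all $n$ (its area is $\Theta(n^{3/2}\cdot n^{1/2}) = \Theta(n^2)$, more than enough room). With this separation, the diamond of radius $(n-k)$ around $v_k$ at time $n$ is contained in $S_n$ and is disjoint from the diamond around $v_{k'}$ for $k' \ne k$, at least for all $n$ up to the point where diamonds could start to overlap — and by choosing the separation to grow appropriately (distance $\ge Cn$ at step $n$ for a suitable constant) one keeps them disjoint forever, or alternatively one accepts bounded overlap and only loses a constant factor in the lower bound. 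I would then verify $|B_n| \ge c' \sum_{k=1}^{n}(n-k)^2 \ge c'' n^3$ and divide by $|V(S_n)| = (2\lceil cn^{3/2}\rceil + 1)^2 \le C' n^3$ to conclude $\underline{\delta}(\underline{S},\underline{v}) \ge c''/C' > 0$.

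The main obstacle is the geometric packing: one must confirm that infinitely many simultaneously-growing diamonds can be packed into the sequence of grids while (a) each new activator lies in the newly-added region (so the sequence is valid and uses no $\bullet$), and (b) the diamonds stay disjoint, or nearly so, for all time. The tension is that the diamonds each grow linearly in radius while the grid side grows like $n^{3/2}$, so asymptotically there is plenty of room, but one needs a clean explicit construction — e.g. arranging the activators along a slowly outward-spiralling or ring-by-ring pattern, with ring $j$ at radius $\sim j^{3/2}$ holding $\sim j$ activators — and then an averaging estimate that the total burned area is a positive fraction of the grid area. A secondary technical point is ensuring validity: each $v_{n+1}$ must avoid $N_{S_{n+1}}[B_n]$, which is automatic once it is placed in the fresh annulus at distance more than $1$ from everything burned so far. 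I would organize the proof as: (1) define the activator locations; (2) prove they lie in the new region and the sequence is valid; (3) prove (near-)disjointness of the diamonds up to time $n$; (4) lower-bound $|B_n|$ by the sum of diamond sizes; (5) take the liminf of the ratio. Steps (1) and (3) carry essentially all the difficulty; the rest is routine estimation with the cubic growth rates.
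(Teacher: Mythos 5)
Your proposal has a genuine gap at its core step. The mechanism you rely on --- keeping the diamonds ``disjoint forever'' by choosing the separation at step $n$ to be $\geq Cn$ --- is impossible: the activators $v_k,v_{k'}$ sit at some fixed finite $L_1$-distance, while at time $n$ their diamonds have radii $n-k$ and $n-k'$, so they intersect as soon as $2n-k-k'$ exceeds that fixed distance, which happens for every pair once $n$ is large. The fallback ``accept bounded overlap and lose only a constant factor'' is also not true as stated: for any fixed pair the overlap fraction tends to $1$, so the inequality $|B_n|\geq c'\sum_{k\leq n}(n-k)^2$ does not follow from (near-)disjointness of all diamonds. What the argument actually requires is a \emph{per-time} accounting: for each $N$, exhibit a window of activation times (say $k\in[N/4,N/2]$) whose fires are pairwise at distance at least $2N-k-k'$ at time $N$, which forces the separation of $v_{k'}$ from earlier activators to grow linearly in the \emph{activation time} $k'$. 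Whether such a placement exists inside $S_{k'}$ then depends on $c$: packing $\Theta(k)$ points pairwise $\Theta(k)$ apart in a box of side $\Theta(ck^{3/2})$ is only possible when $c$ is not too small, so for small $c$ you must thin the activation times (or count only a constant fraction of the fires), and none of this accounting appears in the proposal --- indeed you defer exactly steps (1) and (3), which is where the proof lives.

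Some of the concrete placement constraints you state are also unsatisfiable. You ask for $v_n\in S_n$ at $L_1$-distance more than $2n$ from $S_{n-1}$, but the new annulus has width $\lceil cn^{3/2}\rceil-\lceil c(n-1)^{3/2}\rceil=\Theta(c\sqrt{n})\ll n$, so no such vertex exists; and the annulus of area $\Theta(c^2n^2)$ is not ``more than enough room'' once each of the $\Theta(\sqrt{n}/c)$ recently activated vertices near the boundary excludes an $L_1$-ball of radius $2n$ (area $\Theta(n^2)$) --- for small $c$ these exclusions can cover the entire annulus. For contrast, the paper sidesteps all packing issues by letting the fires merge: the activators are placed on the spiral $v_n=\left(cn^{3/2},\sqrt{n}\right)^*$ (polar coordinates), so every vertex at distance $d$ from the origin is within $O\left(c(d/c)^{2/3}\right)$ of an activator lit by time $(d/c)^{2/3}$; hence all vertices within distance $c\left(\frac{N-6}{550c+1}\right)^{3/2}$ of the origin are burned by time $N$, and this single macroscopic ball already has density bounded below by a constant depending only on $c$. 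If you want to salvage the many-disjoint-fires strategy, replace ``disjoint forever'' by the windowed, per-time disjointness argument sketched above, with separations growing linearly in activation time and an explicit treatment of the small-$c$ regime (skipping steps is permitted by Definition~\ref{def:burn-seq}).
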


It is a relatively straightforward consequence of Theorem~\ref{thm:growth-by-rootn} that if $f:\mathbb{N}_0 \to \mathbb{N}_0$ is a function with $f(n) = \Theta(n^{3/2})$ then there exists a valid sequence of activated vertices $\underline{v}$ with lower burning density $\underline{\delta} (( [-f(n), f(n)]^2)_{n \geq 0}, \underline{v}) > 0$.

For faster growing grids, the result is stated for arbitrary dimensions, as the proof is the same for any fixed dimension.

\begin{theorem}\label{thm:fast-growth}
For any $d \geq 2$, let $f:\mathbb{N}_0 \to \mathbb{N}_0$ be a function with $f(n) = \omega(n^{(d+1)/d})$.  For every $n \geq 0$, let $S_n = [-f(n), f(n)]^d$, and let $\underline{S} = (S_n)_{n \geq 0}$.  Then, for every sequence of activated vertices $\underline{v}$ the burning density is $\delta(\underline{S}, \underline{v}) = 0$.
\end{theorem}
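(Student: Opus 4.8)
The plan is to bound the size of the burned set $B_n$ from above by tracking, for each activator, how far its fire can have spread by time $n$, and then to show that the total volume of all these balls is negligible compared with $|V(S_n)| = (2f(n)+1)^d = \Theta(f(n)^d)$. First I would set up the following observation: if $\underline{v} = (v_0, v_1, \dots)$ is any sequence of activated vertices, then a vertex activated at time $k$ (i.e.\ $v_k \neq \bullet$) contributes to $B_n$ only those vertices within $L_1$-distance $n-k$ of $v_k$, and the ball of radius $n-k$ in $\mathbb{Z}^d$ has at most $C_d (n-k+1)^d$ vertices for a dimensional constant $C_d$. Hence
\[
|B_n| \;\le\; \sum_{k=0}^{n} C_d\,(n-k+1)^d \;=\; C_d \sum_{j=1}^{n+1} j^d \;\le\; C_d' \, n^{d+1},
\]
where $C_d'$ is another constant depending only on $d$ (using $\sum_{j=1}^{m} j^d = \Theta(m^{d+1})$). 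This step is routine; the key point is that it does not matter how cleverly the activators are chosen, nor whether some steps are skipped — the crude union bound over at most $n+1$ balls already caps $|B_n|$ at $O(n^{d+1})$.

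Next I would combine this with the growth hypothesis. Since $f(n) = \omega(n^{(d+1)/d})$, we have $f(n)^d = \omega(n^{d+1})$, and therefore
\[
\frac{|B_n|}{|V(S_n)|} \;\le\; \frac{C_d'\, n^{d+1}}{(2f(n)+1)^d} \;\le\; \frac{C_d'\, n^{d+1}}{f(n)^d} \;\xrightarrow[n\to\infty]{}\; 0 .
\]
Because the limit of the ratio exists and equals $0$, the burning density $\delta(\underline{S}, \underline{v})$ is well-defined and equal to $0$ for \emph{every} sequence $\underline{v}$, which is exactly the claim.

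The only genuine care needed is in the first step: justifying that $B_n \subseteq \bigcup_{k : v_k \neq \bullet} N_{G_n}^{\le n-k}[v_k]$, i.e.\ that every burned vertex traces back to some activator whose fire has had time to reach it. This follows by a straightforward induction on $n$ from the recursive definition of $B_{n+1}$ in Definition~\ref{def:burn-seq}: passing to $N_{G_{n+1}}[\,\cdot\,]$ increases each relevant radius by exactly $1$, and adjoining a new activator $v_{n+1}$ introduces a new ball of radius $0$. I anticipate no real obstacle here — the statement is a soft one, and the "main obstacle" is merely bookkeeping the $\bullet$ steps correctly (skipped steps only help, since they add no new ball) and choosing the dimensional constants $C_d, C_d'$ cleanly; everything else is the elementary estimate $\sum_{j\le m} j^d = \Theta(m^{d+1})$ together with the hypothesis $f(n)^d / n^{d+1} \to \infty$.
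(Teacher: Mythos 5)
Your proposal is correct and follows essentially the same route as the paper: bound $|B_n|$ by a union of $L_1$-balls of radii $n-k$ around the activators, so that $|B_n| = O(n^{d+1})$ (the paper uses the exact count $\sum_k 2^d\binom{k+d}{d} = 2^d\binom{n+d+1}{d+1}$ in place of your constant $C_d'$), and then divide by $(2f(n)+1)^d = \omega(n^{d+1})$ to get density $0$. No gaps; the induction you flag for $B_n \subseteq \bigcup_k B_1(v_k, n-k)$ is the same routine step the paper takes implicitly.
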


In the case $d =2$, Theorem~\ref{thm:fast-growth} gives the following result in contrast to Theorem~\ref{thm:growth-by-rootn}.

\begin{corollary}\label{cor:fast-growth2}
Let $f:\mathbb{N}_0 \to \mathbb{N}_0$ be a function with $f(n) = \omega(n^{3/2})$, for every $n \geq 0$, let $S_n = [-f(n), f(n)]^2$, and let $\underline{S} = (S_n)_{n \geq 0}$.  For every sequence of activated vertices $\underline{v}$ the burning density is $\delta(\underline{S}, \underline{v}) = 0$.
\end{corollary}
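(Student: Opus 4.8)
The plan is short, because Corollary~\ref{cor:fast-growth2} is nothing other than the $d = 2$ instance of Theorem~\ref{thm:fast-growth}. Setting $d = 2$, the hypothesis $f(n) = \omega(n^{(d+1)/d})$ becomes $f(n) = \omega(n^{3/2})$, the grids $S_n = [-f(n),f(n)]^d$ become $S_n = [-f(n),f(n)]^2$, and the conclusion that $\delta(\underline{S},\underline{v}) = 0$ for every sequence of activated vertices is exactly the assertion of the corollary. Since we are entitled to assume Theorem~\ref{thm:fast-growth}, I would simply invoke it and be done; there is no separate argument to run and no obstacle to overcome.

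For orientation, I would recall in one line the mechanism that drives Theorem~\ref{thm:fast-growth}, so the reader sees why the exponent $3/2$ is the relevant threshold when $d=2$. The point is a crude upper bound on $|B_n|$: an activator introduced at time $k$ can, by time $n$, have burned at most the vertices within $L_1$-distance $n-k$ of it, and such a ball in $\mathbb{Z}^2$ contains $O((n-k)^2)$ vertices. Summing over $0 \le k \le n$ gives $|B_n| = O(n^3)$, with an absolute implied constant. Meanwhile $|V(S_n)| = (2f(n)+1)^2$, and $f(n) = \omega(n^{3/2})$ forces $f(n)^2 = \omega(n^3)$, hence $|V(S_n)| = \omega(n^3)$. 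Dividing, $|B_n|/|V(S_n)| \to 0$, so the limit $\delta(\underline{S},\underline{v})$ exists and equals $0$.

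The only place where one must be slightly careful — already dealt with inside the proof of Theorem~\ref{thm:fast-growth}, hence not an issue for the corollary — is that the estimate must be uniform over all valid sequences $\underline{v}$, including those with skipped steps $v_n = \bullet$; but a skipped activation only decreases $|B_n|$, so the ball-counting bound is unaffected. Thus I expect no genuine difficulty: the corollary is a direct specialization of the preceding theorem.
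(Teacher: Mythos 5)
Your proposal is correct and matches the paper exactly: the corollary is stated in the paper as the $d=2$ specialization of Theorem~\ref{thm:fast-growth}, with no separate argument, and your one-line recollection of the ball-counting bound $|B_n| = O(n^3)$ against $|V(S_n)| = \omega(n^3)$ is precisely the mechanism of the theorem's proof.
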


The proof of Theorem~\ref{thm:growth-by-rootn} is given in Section~\ref{sec:fast-growth}. In Section~\ref{sec:d-dim}, Theorem~\ref{thm:fast-growth} is proved and results generalizing Theorem~\ref{thm:growth-by-rootn} are given for grids of arbitrary, but fixed, dimension.  In particular, in Theorem~\ref{thm:d-dim-pos-dens}, it is shown that for any fixed dimension $d$, the graph sequence $(S_n)_{n \geq 1}$ given by $S_n = [-\lceil n^{(d+1)/d}\rceil, \lceil n^{(d+1)/d}\rceil]^d$ always has a sequence of activated vertices with positive lower burning density.  While the proof of Theorem~\ref{thm:growth-by-rootn} is constructive, the proof of Theorem~\ref{thm:d-dim-pos-dens} is probabilistic.

As distances within the lattice are used regularly within the proofs, for every $x \in \mathbb{Z}^d$ and $r \geq 0$, let $B_1(x, r)$ be the closed $L_1$-ball of radius $r$ centred at $x$: $B_1(x, r) = \{y \in \mathbb{Z}^d :\ ||x-y||_1 \leq r\}$.  For any $x, y \in \mathbb{Z}^d$, $d(x, y)$ is used to denoted the graph distance between $x$ and $y$, which is the same as the $L_1$-distance, denoted $d_1(x, y)$.  The $L_2$-distance between two points is denoted $d_2(x, y)$.

\section{Slowly growing grids}\label{sec:growth-by-one}

The results in this section address the case that for some $c \geq 1$, the sequence of underlying graphs is $([-cn, cn]^2)_{n \geq 0}$ and a vertex is activated at every time step, but we first examine the case in which $c=1$.  Within this framework, which burning densities are achievable?  Since the vertex $(0, 0)$ will always be activated, the ball of radius $n$ about the origin in $[-n, n]^2$ is always a subset of the burned vertices at time $n$.  Because of this central $L_1$ ball that is burned, if the burning density of a sequence of activated vertices exists, the density is in the real interval $[1/2, 1]$.  The first aim of this section is to show that, in fact, each of these values is achievable as a burning density.

One feature of having the ball of radius $n$ about the origin in $[-n, n]^2$ being burned is that the effect of newly activated vertices on future burned sets is only seen in the quadrant in which that activated vertex was chosen.  For this reason, we first determine the achievable burning densities in a single quadrant: the sequence of graphs $(Q_n)_{n \geq 0} = ([0, n]^2)_{n \geq 0}$.  In Lemma~\ref{lem:density0}, it is shown that, as long as no vertex is activated in at least one of the time steps $n =1$ or $n=2$, then burning density $1/2$ in $(Q_n)_{n \geq 0}$ is achievable.  In Lemma~\ref{lem:skinny-triangle}, for any $\rho \in (1/2, 1)$, a sequence of activated vertices is defined that achieves burning density $\rho$ in $(Q_n)_{n \geq 0}$.  Finally, in Lemma~\ref{lem:layer-cake}, it is shown that as long as there are bounded gaps between times that vertices are activated, a sequence can be chosen in $Q_n$ with burning density $1$.  Together, these three regimes of vertex-activation can be combined to produce sequences of activated vertices achieving any density in $[1/2, 1]$ for the sequence of graphs $([-n, n]^2)_{n \geq 0}$.  These combinations are described in the proof of Theorem~\ref{thm:growth-by-one}.

\begin{lemma}\label{lem:density0}
Let $\underline{Q}$ be the sequence of square grids such that $Q_n = [0,n]^2$. Define $\underline{v}$ to be a valid sequence of activated vertices such that either $v_1 = \bullet$ or $v_2 = \bullet$, and such that for any $n$ with $v_n \not = \bullet$, $v_n \in V(Q_n)$ is the unburned vertex nearest to $(0,0)$ with the greatest $y$-coordinate. Then we have that
\[
|B_n| \leq \frac{(n+2)(n+1)}{2} + (2+ \log_2 n)(n+1).
\]
\end{lemma}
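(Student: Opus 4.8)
The idea is that greedy activation keeps the fire confined to a thin band around the anti‑diagonal $x+y=n$, which carries the $L_1$‑ball grown from the origin.

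Write $T_n=\{(x,y)\in[0,n]^2:\ x+y\le n\}$, so that $|T_n|=\binom{n+2}{2}=\frac{(n+2)(n+1)}{2}$. Since $Q_0=\{(0,0)\}$, necessarily $v_0=(0,0)$, and the fire ignited there alone burns by round $n$ exactly the vertices within graph‑distance $n$ of the origin; thus $T_n\subseteq B_n$. More generally, for an activator $v_k$ ($k\le n$, $v_k\neq\bullet$) with $L_1$‑norm $\ell_k$, the fire from $v_k$ occupies at round $n$ only vertices $(x,y)$ with $d_1((x,y),v_k)\le n-k$, and from $d_1((x,y),v_k)\ge|(x+y)-\ell_k|$ we get $x+y\le\ell_k+(n-k)$ for every such vertex. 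Hence, writing $M_n=\max\bigl(0,\ \max_{k\le n,\ v_k\neq\bullet}(\ell_k-k)\bigr)$,
\[
B_n\ \subseteq\ \bigl\{(x,y)\in[0,n]^2:\ x+y\le n+M_n\bigr\}.
\]
Since each anti‑diagonal $\{x+y=n+j\}$ meets $[0,n]^2$ in at most $n+1$ vertices, $|B_n|\le\frac{(n+2)(n+1)}{2}+M_n(n+1)$, and the lemma reduces to showing $M_n\le 2+\log_2 n$.

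To this end I would prove, by induction on $n$, the following shape of $B_n$: there is an integer $s_n\ge 0$ so that $B_n=T_n\cup F_n\cup P_n$, where $F_n=\{(x,y)\in[0,n]^2:\ n<x+y\le n+s_n\}$ is a band of $s_n$ completely burned anti‑diagonals and $P_n$ (possibly empty) lies inside the single anti‑diagonal $\{x+y=n+s_n+1\}$, where it forms an interval of $x$‑coordinates anchored at that anti‑diagonal's leftmost vertex $x=s_n+1$. The inductive step uses three one‑round facts: (i) a completely burned anti‑diagonal spreads so as to fill the next one entirely, because every vertex of an anti‑diagonal has a neighbour on the previous one inside $[0,n]^2$; (ii) a left‑anchored burned interval of length $\sigma$ spreads to a left‑anchored interval of length $\sigma+1$ on the next anti‑diagonal; and (iii) greedy activation puts $v_{n+1}$ at the leftmost unburned vertex of the lowest not‑completely‑burned anti‑diagonal, which is precisely the anti‑diagonal now carrying the shifted $P_n$, appending one further vertex to its interval. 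Consequently $s_n$ is nondecreasing, every activator $v_k$ lies on an anti‑diagonal $\{x+y=\ell_k\}$ with $\ell_k\le k+s_k+1$, so $\ell_k-k\le s_k+1\le s_n+1$, and therefore $M_n\le s_n+1$.

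It remains to bound $s_n$, and this is where the doubling appears. Let $t^{(k)}$ be the first round with $s=k$. By the description above, throughout the rounds with $s\equiv k$ the interval carried by $P$ equals $1$ shortly after $t^{(k)}$ and grows by exactly $2$ per round (one from shifting to the next anti‑diagonal, one from the new activator), whereas its ambient anti‑diagonal $\{x+y=t+k+1\}$ at round $t$ has only $t-k$ vertices; matching these forces the anti‑diagonal to be completed — and $s$ to increase — at a round $t$ with $t\ge 2t^{(k)}+1-k$, i.e.\ $t^{(k+1)}\ge 2t^{(k)}+1-k$. Together with the explicitly computed values of $t^{(1)}$ and $t^{(2)}$ from the first few rounds (this is the only role of the hypothesis $v_1=\bullet$ or $v_2=\bullet$), a short induction gives $t^{(k)}\ge 2^{k-1}$. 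Hence $s_n=k$ implies $n\ge t^{(k)}\ge 2^{k-1}$, i.e.\ $s_n\le 1+\log_2 n$, so $M_n\le s_n+1\le 2+\log_2 n$ and
\[
|B_n|\ \le\ \frac{(n+2)(n+1)}{2}+(2+\log_2 n)(n+1),
\]
as claimed.

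The only real work is the inductive verification of the structural description, and in particular that the burned part of the "partial" anti‑diagonal stays one left‑anchored interval growing by exactly two vertices each round. Two complications must be controlled: the grid $Q_n$ grows every round, so each anti‑diagonal acquires new endpoints over time; and the rounds at which $s$ increments (a partial anti‑diagonal fills while a fresh one is born) split into cases according to whether the spreading fire or the new activator contributes the last vertex — exactly the cases one must check to pin down $t^{(k+1)}\ge 2t^{(k)}+1-k$. None of this is deep, but it is where the bookkeeping lives.
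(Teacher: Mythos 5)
Your proposal is correct and follows essentially the same route as the paper: the same structural description of the burned set (the region $\{x+y\le n+s\}$ together with a left-anchored partial segment on the next anti-diagonal, which gains two vertices per round while the diagonal gains one), the same doubling recursion for the times at which each new diagonal starts burning, and the same final counting step; your $M_n$ and $t^{(k)}$ play exactly the roles of the paper's diagonal index $t$ and times $n_t$, with $n_{t+1}\ge 2n_t-t+1$. The only caveat is that your recursion constant in $t^{(k+1)}\ge 2t^{(k)}+1-k$ may be off by an additive $O(1)$ in the worst case (the fresh diagonal can start with one burned vertex, or the deficit can stall for a round at a transition), but since you anchor the induction at explicitly computed values of $t^{(1)}$ and $t^{(2)}$ this does not affect the conclusion $t^{(k)}\ge 2^{k-1}$, hence $s_n\le 1+\log_2 n$, and the stated bound.
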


\begin{proof}
The purpose of choosing vertices to activate as close as possible to $(0, 0)$ and with the greatest $y$-coordinate is to ensure that the burning set is contained in a ball centred at the origin with a radius that is growing only slightly faster than the growth-rate of the underlying grid.  For every $t \geq 0$, and $n \geq 1$, call the set of vertices whose cartesian coordinates satisfy $x+y = n+t$ and $0 \leq x,y \leq n$, the \emph{$t$-th diagonal at time $n$}.  Within the grid, the line $x+y = n$ is the boundary of the ball of radius $n$ around the origin and will always be burning at time $n$.  This is the $0$-th diagonal at time $n$.  The way in which the activated vertices are chosen will result in a burning set with the property that for some $t \geq 1$, $B_1((0, 0), n+t-1)$ is burning and the remaining burned vertices will be an initial sequence of the vertices in the $t$-th diagonal: $(t, n), (t+1, n-1), (t+2, n-2), \ldots, (n, t)$, ordered by increasing $x$-coordinates (or equivalently, by decreasing $y$-coordinates).

At time $n$, there are $n-t+1$ vertices in the $t$-th diagonal.  Suppose that the first $i < n-t+1$ of these vertices are burned at time $n$, then if, in the next step, the next vertex on the $t$-th diagonal is activated, then at step $n+1$, the `new' $t$-th diagonal has $(n+1)-t+1$ vertices, of which, $i+2$ are burned.  Continuing in this way, at time $n + (n-t+1-i)$, every vertex in the $t$-th diagonal is burned.

If either $v_1 = \bullet$ or $v_2 = \bullet$, then at time $n = 3$, the burned set is contained in the vertices of the grid whose cartesian coordinates satisfy $x+y \leq 4$.  In particular, there are no burned vertices in the $2$nd diagonal at time $3$.  For every $t \geq 2$, let $n_t$ be the largest $n$ for which the $t$-th diagonal has no burned vertices at time $n$.  We then have that $n_2 \geq 3$.  Since the $t$-th diagonal at time $n_t$ contains $n_t - t + 1$ vertices and will be entirely burned after at least $n_t-t+1$ time steps, $n_{t+1} \geq 2n_t-t+1$. Hence, we find that $n_t \geq (n_2-2)2^{t-2} + t$.  For any $t$ and $n$ with $n_{t} \leq n < n_{t+1}$, $B_n \subseteq B_1((0, 0), n+t)$ and
\begin{align*}
|B_1((0, 0), n+t) \cap [0, n]^2|
	&= |B_1((0, 0), n) \cap [0, n]^2| + \sum_{i = n-t+1}^{n} i\\
	&= \frac{(n+1)(n+2)}{2} + t(n+1) - \frac{t(t+1)}{2}\\
	& \leq  \frac{(n+1)(n+2)}{2} + t(n+1).
\end{align*}
If $n \geq n_t \geq 2^{t-2} = \frac{1}{4}2^t$, then $t \leq 2 + \log_2(n)$ so that
\[
|B_n| \leq  \frac{(n+1)(n+2)}{2} + (2 + \log_2(n))(n+1),
\]
and the proof follows.
 \end{proof}

We need the following lemma.

\begin{lemma}\label{lem:skinny-triangle}
Let $\underline{Q}$ be the sequence of square grids such that $Q_n = [0,n]^2$ and let $\rho \in (0,1)$. Let $\underline{v}$ be the sequence of activated vertices defined by $v_0 = (0,0)$ and for every $n \geq 1$,
\[
v_n =
\begin{cases}
(\lfloor \rho n \rfloor ,n) & \text{if } \lfloor \rho n \rfloor - \lfloor \rho(n-1) \rfloor = 1, \\
\bullet & \text{if } \lfloor \rho n \rfloor  - \lfloor \rho(n-1) \rfloor = 0.
\end{cases}
\]
We then have that $\delta (\underline{Q},\underline{v}) = \frac{1}{2} (1+ \rho)$.
\end{lemma}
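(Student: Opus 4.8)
The plan is to pin down the burned set $B_m$ exactly, as the intersection of the grid $Q_m=[0,m]^2$ with a union of closed $L_1$-balls, then count $|B_m|$ one grid-row at a time and compare with $|V(Q_m)|=(m+1)^2$. Geometrically, the activators $v_{k}=(\lfloor\rho k\rfloor,k)$ march along the top edge of the growing square with slope $\rho$, and the balls they spawn fill out, up to lower-order error, the region $\{(x,y)\in[0,m]^2:\ x+(1-\rho)y\le m\}$, a triangle clipped to the square; its area is $\tfrac12(1+\rho)m^2$, which is the source of the claimed density.

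First I would record the structure of $\underline{v}$. Since $0<\rho<1$, the quantity $\lfloor\rho n\rfloor$ increases by $0$ or $1$ at each step, so the non-$\bullet$ activators occur exactly at the times $n$ when $\lfloor\rho n\rfloor$ increments; writing $k_j$ for the least $k$ with $\lfloor\rho k\rfloor=j$, the activators are $v_0=(0,0)$ and $v_{k_j}=(j,k_j)$ for $j\ge 1$, and for every $N$ one has $\{\lfloor\rho k\rfloor:0\le k\le N\}=\{0,1,\dots,\lfloor\rho N\rfloor\}$. A short computation then shows $\underline{v}$ is valid and lies in $\prod_n V(Q_n)$: one has $j=\lfloor\rho k_j\rfloor<k_j$, so $v_{k_j}\in V(Q_{k_j})$ and $d_1(v_{k_j},(0,0))=j+k_j>k_j$, while $d_1(v_{k_j},v_{k_i})=(j-i)+(k_j-k_i)>k_j-k_i$ for $i<j$; hence $v_{k_j}$ is not yet burned at time $k_j$. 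Next I would prove by induction on $m$ that
\[
B_m \;=\; V(Q_m)\,\cap\,\Big(B_1\big((0,0),m\big)\;\cup \bigcup_{\substack{0\le k\le m\\ v_k\neq\bullet}}B_1(v_k,m-k)\Big),
\]
the inductive step being immediate from Definition~\ref{def:burn-seq} once one notes that adjacency in the nested grids is $L_1$-adjacency, and that a shortest $L_1$-path between two vertices of $Q_m$ stays inside $Q_m$ and can be traversed without leaving the current grid, so no ball is clipped before time $m$.

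The heart of the proof is then a row-by-row count. Fixing $m$ and $0\le t\le m$, I would analyse the row $y=m-t$: a vertex $(x,m-t)$ lies in $B_1(v_k,m-k)$ with $k\le m-t$ exactly when $|x-\lfloor\rho k\rfloor|\le t$, and since the activator $x$-coordinates with $1\le k\le m-t$ are precisely $\{1,\dots,\lfloor\rho(m-t)\rfloor\}$ and consecutive radius-$t$ integer intervals overlap, these balls together with $B_1((0,0),m)$, which meets the row in $\{0,\dots,t\}$, cover exactly $\{0,1,\dots,\lfloor\rho(m-t)\rfloor+t\}\cap[0,m]$; one checks $\lfloor\rho(m-t)\rfloor+t\le\rho m+(1-\rho)t\le m$, so this is the whole set, while the activators with $m-t<k\le m$ reach only $x\le\lfloor\rho k\rfloor+2(m-k)-t\le\lfloor\rho(m-t)\rfloor+t$ and hence add nothing. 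So, provided $\lfloor\rho(m-t)\rfloor\ge1$, row $y=m-t$ contributes exactly $\lfloor\rho(m-t)\rfloor+t+1$ burned vertices; the same formula covers the top row $t=0$, and the at most $\lceil1/\rho\rceil$ remaining rows, those with $\lfloor\rho(m-t)\rfloor=0$, contribute at most $m+1$ each. Summing and using $\sum_{s=0}^m\lfloor\rho s\rfloor=\rho\binom{m+1}{2}+O(m)$ gives $|B_m|=(1+\rho)\binom{m+1}{2}+O(m)$, with implied constants depending only on $\rho$; dividing by $(m+1)^2$ and letting $m\to\infty$ yields $\delta(\underline{Q},\underline{v})=\tfrac12(1+\rho)$.

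The step I expect to demand the most care is the exact ball description of $B_m$ in the growing-grid setting — ensuring that the fire from each activator really reaches all of its $L_1$-ball inside $Q_m$ and is never clipped at an earlier, smaller grid — together with the routine but fiddly accounting for the $O(m)$ boundary rows near the top edge. Neither point is deep, but both need to be handled carefully to make the count exact.
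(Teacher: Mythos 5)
Your proposal is correct, and it reaches the same count as the paper by a somewhat different bookkeeping. The paper slices the burned set along diagonals $x+y=n+k$: it observes that the top vertex of the $k$-th diagonal is activated at time $\lceil k/\rho\rceil$, that this diagonal thereafter gains one burned vertex per step, and hence that $|B_n|=\tfrac{(n+1)(n+2)}{2}+\sum_{k=1}^{\lfloor n\rho\rfloor}\bigl(n+1-\lceil k/\rho\rceil\bigr)$, which it then sandwiches with floor/ceiling estimates. You instead prove the exact identity $B_m=V(Q_m)\cap\bigl(B_1((0,0),m)\cup\bigcup_k B_1(v_k,m-k)\bigr)$ — including the monotone-path/timing argument showing the growing grid never clips the spread from an activator, since $v_k$ has both coordinates at most $k$ — and then count row by row, checking that the late activators ($k>m-t$) add nothing to row $y=m-t$ because $\lfloor\rho k\rfloor+2(m-k)-t\le\lfloor\rho(m-t)\rfloor+t$. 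Your route makes fully explicit a geometric fact the paper uses implicitly (each activator burns its entire $L_1$ ball inside the current grid), at the price of the interval-covering and boundary-row bookkeeping; the paper's diagonal tracking is leaner, since after activation each diagonal's burned segment grows by exactly one vertex per step, so no explicit ball description is needed. Both yield $|B_m|=(1+\rho)\tfrac{m^2}{2}+O(m)$ and hence the density $\tfrac12(1+\rho)$, so your argument is a valid alternative proof.
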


\begin{proof}

For the choice of activators defined in the statement of the theorem, a vertex is activated approximately every $\frac{1}{\rho}$ steps. Each time a new vertex is activated, it is always chosen at the top edge of the grid at that time step and adjacent to a burned vertex.  For every $k \geq 1$, the top vertex in the $k$-th diagonal is activated at time $n$ with $n$ defined by $\rho(n-1) < k \leq \rho n$.  That is, the vertex $(k, n)$ is activated at time $n = \left\lceil \frac{k}{\rho} \right\rceil$.  For every $n \geq  \left\lceil \frac{k}{\rho} \right\rceil$, the number of burned vertices in the $k$-th diagonal at time $n$ is $n - \left\lceil \frac{k}{\rho} \right\rceil + 1$.

As the first vertex burned is the origin, for any $n$, all vertices in the ball of radius $n$ about the origin are burned at time $n$.  In the graph $Q_n$, this set of vertices has size $\frac{(n+2)(n+1)}{2}$.  Furthermore, for every $k \leq n \rho$, there are $n-\lceil k/\rho \rceil + 1$ vertices burned along the $k$-th diagonal at time $n$.  Thus,
\begin{align}
|B_n| &= \frac{(n+2)(n+1)}{2} + \sum_{k=1}^{\lfloor n \rho \rfloor} \left( n + 1 - \lceil k \rho^{-1} \rceil \right) \notag\\
 	&= \frac{(n+2)(n+1)}{2} + \lfloor n\rho \rfloor (n+1) - \sum_{k = 1}^{\lfloor n \rho \rfloor} \left\lceil \frac{k}{\rho} \right\rceil. \label{eq:burned-skinny}
\end{align}
To bound the size of the burned set, note that the sum in the expression in~\eqref{eq:burned-skinny} is bounded above by
\begin{align}
\sum_{k = 1}^{\lfloor n \rho \rfloor} \left\lceil \frac{k}{\rho} \right\rceil
	&\leq \sum_{k = 1}^{\lfloor n \rho \rfloor} \frac{k+\rho}{\rho}  \notag\\
	&=\frac{1}{\rho} \frac{\lfloor n \rho \rfloor(\lfloor n \rho \rfloor +1)}{2} + \lfloor n \rho \rfloor \notag\\
	&\leq \frac{n\rho(n\rho +1)}{2 \rho} + n \rho \notag\\
	&= \frac{n^2 \rho}{2} \left(1 + \frac{1}{n \rho} + \frac{2}{n}\right). \label{eq:ub-sum-skinny}
\end{align}
The same sum is bounded below by
\begin{align}
\sum_{k = 1}^{\lfloor n \rho \rfloor} \left\lceil \frac{k}{\rho} \right\rceil
	&\geq \sum_{k = 1}^{\lfloor n \rho \rfloor} \frac{k}{\rho} \notag \\
	&=\frac{1}{\rho} \frac{\lfloor n \rho \rfloor(\lfloor n \rho \rfloor +1)}{2} \notag\\
	&\geq \frac{(n\rho - 1)n\rho}{2\rho} \notag \\
	&=\frac{n^2 \rho}{2}\left( 1 - \frac{1}{n\rho}\right). \label{eq:lb-sum-skinny}
\end{align}
Combining~\eqref{eq:burned-skinny} with~\eqref{eq:lb-sum-skinny} gives that
\begin{align}
|B_n|
	&\leq \frac{(n+2)(n+1)}{2} + \lfloor n\rho \rfloor (n+1) - \frac{n^2 \rho}{2}\left( 1 - \frac{1}{n\rho}\right) \notag\\
	&\leq \frac{(n+2)(n+1)}{2} + \frac{n^2 \rho}{2}\left( 1 + \frac{2}{n} + \frac{1}{n \rho}\right). \label{eq:ub-skinny}
\end{align}
Similarly, combining~\eqref{eq:burned-skinny} with~\eqref{eq:ub-sum-skinny} gives
\begin{align}
|B_n|
	&\geq \frac{(n+2)(n+1)}{2} + \lfloor n\rho \rfloor (n+1) -  \frac{n^2 \rho}{2} \left(1 + \frac{1}{n \rho} + \frac{2}{n}\right) \notag\\
	&\geq \frac{(n+1)^2}{2} + (n \rho-1)n -\frac{n^2 \rho}{2} \left(1 + \frac{1}{n \rho} + \frac{2}{n}\right) \notag\\
	&\geq \frac{(n+1)^2}{2} + \frac{n^2 \rho}{2}\left(2 - \frac{2}{n \rho} - 1 - \frac{1}{n \rho} - \frac{2}{n} \right) \notag\\
	&=\frac{(n+1)^2}{2} + \frac{n^2 \rho}{2}\left(1 - \frac{3}{n \rho} - \frac{2}{n}\right). \label{eq:lb-skinny}
\end{align}
The inequalities~\eqref{eq:ub-skinny} and~\eqref{eq:lb-skinny} together show that
\[
\lim_{n \to \infty} \frac{|B_n|}{|V(Q_n)|} = \lim_{n \to \infty} \frac{|B_n|}{(n+1)^2} = \frac{1}{2} + \frac{\rho}{2},
\]
and the proof follows.\end{proof}

We need one more lemma before we prove Theorem~\ref{thm:growth-by-one}.

\begin{lemma}\label{lem:layer-cake}
Let $\underline{Q}$ be the sequence of square grids such that $Q_n = [0,n]^2$ and let $k$ be an arbitrary positive integer. Let $\underline{v} = (v_n)_{n \geq 0}$ be a sequence of activated vertices such that there are never more than $k$ consecutive $\bullet$'s and with the property that if $v_n \neq \bullet$, then $v_n = (n, n)$. For such a sequence $\underline{v}$, we have that $\delta (\underline{Q},\underline{v}) =1$.
\end{lemma}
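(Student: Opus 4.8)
The plan is to write the burned set at time $n$ as a union of $L_1$-balls — one centred at each previously activated corner — and then to show that the vertices it misses all lie within $O(k)$ of the top or right edge of $Q_n$, hence form an $O(kn)$-sized set, negligible next to $(n+1)^2$.

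Let $M=\{m\ge 0:\ v_m\neq\bullet\}$ be the set of activation times. By the hypothesis on runs of $\bullet$'s, every block of $k+1$ consecutive integers contains an element of $M$ (in particular $\min M\le k$), and $v_m\neq\bullet$ forces $v_m=(m,m)$. Since $(m,m)\in B_m$ whenever $m\in M$, an easy induction gives $B_1((m,m),n-m)\cap V(Q_n)\subseteq B_n$ for every $m\in M$ with $m\le n$. Hence the set $U_n$ of vertices not yet burned at time $n$ is contained in $\{(x,y)\in[0,n]^2:\ |x-m|+|y-m|>n-m\ \text{for all }m\in M\cap[0,n]\}$, and it suffices to prove $|U_n|=O(kn)$.

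The heart of the matter is a short computation in the rotated coordinates $s=x+y$, $t=x-y$, using $|x-m|+|y-m|=\max(|s-2m|,|t|)$. This shows that $(x,y)$ lies in the radius-$(n-m)$ ball about $(m,m)$ precisely when $m$ lies in the interval $I(x,y)=[\max(0,s-n),\ \min(\frac{s+n}{3},\,n-|t|)]$, which is always a subinterval of $[0,n]$, and one computes that its length is at least $\min(\min(n-x,n-y),\,\frac{n}{3})$. Consequently, once $n\ge 3(k+1)$, if $\min(n-x,n-y)\ge k+1$ then $I(x,y)$ contains $k+1$ consecutive integers, one of which lies in $M$, so $(x,y)\in B_n$. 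Contrapositively, every vertex of $U_n$ satisfies $\min(n-x,n-y)\le k$, i.e.\ $x\ge n-k$ or $y\ge n-k$; there are at most $2(k+1)(n+1)$ such vertices.

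Finally, $|B_n|\ge (n+1)^2-|U_n|\ge(n+1)^2-2(k+1)(n+1)$ while $|V(Q_n)|=(n+1)^2$, so $|B_n|/|V(Q_n)|$ is squeezed between $1-2(k+1)/(n+1)$ and $1$; letting $n\to\infty$ with $k$ fixed gives $\delta(\underline{Q},\underline{v})=1$. The only step requiring genuine care is the coordinate computation identifying $I(x,y)$ and the lower bound on its length; within it one should check the case $s\le n$ separately, where the lower endpoint of $I(x,y)$ is $0$ and the relevant covering ball comes from the smallest activation time (at most $k$), playing the role of the ``origin ball'' of the earlier lemmas. Everything else is routine bookkeeping.
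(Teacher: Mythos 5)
Your proof is correct. At the level of strategy it ends up in the same place as the paper: both arguments show that the unburned vertices at time $n$ are confined to strips of width $O(k)$ along the top and right edges of $Q_n$, hence number only $O(kn)$, which is negligible against $(n+1)^2$. The difference is in how that confinement is established. The paper argues qualitatively, describing the unburned ``triangular regions'' between consecutive corner activations and a small leftover region near the corner $(n,n)$, and then estimates ``very roughly'' that these fit inside two $k \times n$ strips. You instead prove an exact covering criterion: writing $s=x+y$, $t=x-y$ and using $|x-m|+|y-m|=\max(|s-2m|,|t|)$, you show $(x,y)\in B_1((m,m),n-m)$ exactly when $m\in I(x,y)=[\max(0,s-n),\min(\frac{s+n}{3},n-|t|)]\subseteq[0,n]$, bound the length of $I(x,y)$ below by $\min(\min(n-x,n-y),n/3)$ (I checked both cases $s\le n$ and $s>n$; the bound holds), and then invoke the hypothesis that every $k+1$ consecutive times contain an activation to conclude that any vertex with $\min(n-x,n-y)\ge k+1$ is burned once $n\ge 3(k+1)$. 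Together with the routine induction that $B_1((m,m),n-m)\cap V(Q_n)\subseteq B_n$ for each activation time $m\le n$ (the only point needing a small boundary check, which goes through), this gives $|U_n|\le 2(k+1)(n+1)$ and hence density $1$. What your route buys is rigour: it replaces the paper's informal description of the unburned regions and its unproven ``radius $2k-1$ near the corner'' estimate with a verifiable inequality, at the cost of a slightly longer coordinate computation; the constants ($2(k+1)(n+1)$ versus the paper's $2nk$) are immaterial to the limit.
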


\begin{proof}
For such a sequence $\underline{v}$, at a given time $n$, the unburned region in the graph $Q_n$ consists of  `triangular regions' along the top and right edges and a region contained in a ball centred at the top-righthand corner.

Along the top edge of the grid, each of these triangular regions consist of at most $k$ unburned vertices in the top-most row, above at most $k-2$ consecutive unburned vertices, and continuing in this way to a row of either $1$ or $2$ consecutive unburned vertices.   These unburned triangular regions are contained inside an $n \times \lceil k/2\rceil$ grid. The same applies to the right edge of the grid.

It will take up to $k$ steps until the burned region around a vertex activated at a particular moment will be in the same connected component of the subgraph of burned vertices as the origin. Hence, there may be some unburned vertices in a ball of radius $2k-1$ around the vertex in the top-righthand corner.  All these vertices are either within the top $k$ rows of the grid, or within the right-most $k$ columns of the grid.

Estimating very roughly, all the unburned vertices are either contained in a $n \times k$ grid along the top edge or a $k \times n$ grid along the right-most edge.  These two regions contain at most $2nk$ vertices.

Thus, at time step $n$, the number of unburned vertices is at most $2nk$.  Therefore,
\[
|B_n| \geq (n+1)^2 -   2nk
\]
and so
 \begin{align*}
 \lim_{n \to \infty} \frac{|B_n|}{(n+1)^2}
  &= 1 - \lim_{n \to \infty} \frac{2nk}{(n+1)^2}\\
   &= 1-0 = 1,
\end{align*}
as $k$ is fixed.
\end{proof}

Together, the activated sequences given by each of Lemma~\ref{lem:density0}, Lemma~\ref{lem:skinny-triangle}, and Lemma~\ref{lem:layer-cake} can be used to prove Theorem~\ref{thm:growth-by-c} in the case $c = 1$, which we restate here.

\begin{theorem}\label{thm:growth-by-one}
Let $\underline{S}$ be the sequence of square grids such that $S_n = [-n,n]^2$. For each $\epsilon \in [\frac{1}{2},1]$, there is a valid sequence of activated vertices $\underline{v}=(v_1,v_2,\dots)$ such that the burning density in $\underline{S}$ is $\delta (\underline{S},\underline{v})=\epsilon$.
\end{theorem}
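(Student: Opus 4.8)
The plan is to split the grid $S_n=[-n,n]^2$ into its four closed quadrants $Q^{(1)},\dots,Q^{(4)}$, each order‑isomorphic to $Q_n=[0,n]^2$ and meeting its neighbours only along the coordinate axes, to run an essentially independent burning process inside each quadrant using the activator sequences supplied by Lemmas~\ref{lem:density0}, \ref{lem:skinny-triangle} and~\ref{lem:layer-cake}, and to observe that the burning density in $S_n$ is then the average of the four quadrant densities. Activating the origin at time $0$ forces $B_1((0,0),n)\subseteq B_n$ for every $n$, and $B_1((0,0),n)\cap Q^{(i)}$ is exactly the triangle $\{x+y\le n\}$ (up to reflection) that plays the role of the ``always burned'' region in all three lemmas. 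Given a target $\epsilon\in[\tfrac12,1]$, I choose densities $d_1,\dots,d_4\in[\tfrac12,1]$ with $\tfrac14(d_1+d_2+d_3+d_4)=\epsilon$ — for instance $d_1=\dots=d_4=\epsilon$ — and realise $d_i$ inside $Q^{(i)}$ via the sequence of Lemma~\ref{lem:density0} when $d_i=\tfrac12$, of Lemma~\ref{lem:skinny-triangle} with $\rho=2d_i-1$ when $\tfrac12<d_i<1$, and of Lemma~\ref{lem:layer-cake} when $d_i=1$. Interleaving these four per‑quadrant sequences into a single sequence $\underline v$ on $S_n$ (a step used by some quadrant becomes an activation there, a step skipped by all four stays a $\bullet$) produces the candidate sequence.

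The crux is a \emph{non-interference} claim: the fire started by an activator placed in one quadrant never burns anything new outside that quadrant. Concretely, if $w$ is activated at time $t\ge1$ with $w\in Q^{(1)}$, write $w=(a,b)$ with $a,b\ge0$; since $w\in S_t$ we have $a\le t$ and $b\le t$, so for any $u=(u_x,u_y)$ with $u_x\le0$, using $b\le a+t$,
\[
d_1((0,0),u)=|u_x|+|u_y|\le |u_x|+b+|b-u_y|\le \bigl(a+|u_x|+|b-u_y|\bigr)+t=d_1(w,u)+t,
\]
and the case $u_y\le0$ is symmetric, while the other three quadrants follow by reflecting coordinates. Hence any $u\notin Q^{(1)}$ that the fire from $w$ reaches by time $n$ (i.e.\ with $t+d_1(w,u)\le n$) already lies in $B_1((0,0),n)\subseteq B_n$ and so contributes nothing new. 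Combined with the facts that $[0,n]^2$ is $L_1$‑geodesically convex in $\mathbb{Z}^2$ (so graph distances inside $Q^{(i)}$ agree with those in $Q_n$) and that $B_1(a_k,j)\subseteq S_{t_k+j}$ for any activator $a_k$ placed at time $t_k$ (so no fire is ever clipped by the growing boundary), this shows that $B_n\cap Q^{(i)}$ coincides exactly with the burned set $B_n^{(i)}$ at time $n$ of the process on $(Q_m)_m$ whose activators are the origin at time $0$ together with the activators assigned to $Q^{(i)}$.

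It remains to check validity and compute the limit. For all large $n$ each per‑quadrant activator used in Lemmas~\ref{lem:density0}--\ref{lem:layer-cake} lies in the open quadrant $\{u_x,u_y\ge1\}$, so its entire closed $S_n$‑neighbourhood is contained in the copy $Q^{(i)}\cong Q_n$; hence $w\in N_{S_n}[B_{n-1}]$ if and only if $w\in N_{Q_n}[B_{n-1}\cap Q^{(i)}]$, and validity of $\underline v$ in $\underline S$ reduces to the validity of each per‑quadrant sequence in $\underline Q$, which is part of the hypotheses of those lemmas. By non‑interference the sets $B_n\cap Q^{(i)}$ agree outside $B_1((0,0),n)$ with pairwise disjoint subsets of the open quadrants and overlap only along the axes, so
\[
|B_n|=\sum_{i=1}^4\bigl|B_n^{(i)}\bigr|-O(n);
\]
dividing by $|V(S_n)|=(2n+1)^2$ and using $|V(Q_n)|=(n+1)^2$ together with the three lemmas gives
\[
\delta(\underline S,\underline v)=\lim_{n\to\infty}\frac{\sum_{i=1}^4 |B_n^{(i)}|}{(2n+1)^2}=\sum_{i=1}^4 d_i\cdot\frac14=\epsilon .
\]

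The main obstacle is the non‑interference claim: one must argue carefully that the always‑burned central $L_1$‑ball genuinely decouples the four quadrant fires and that each of them faithfully reproduces the corresponding $Q_n$‑process, including the book‑keeping that the growing grid never truncates a fire. A secondary, purely technical point arises only if one insists on a strictly skip‑free sequence as in Theorem~\ref{thm:growth-by-c}: at any step left unused by the four per‑quadrant sequences, activate some currently unburned vertex of $S_n$ just outside $B_1((0,0),n)$ and away from the active part of its quadrant's construction (such a vertex is a valid activator and is absorbed into the fire within a bounded number of steps), and resolve the rare collisions between two skinny‑triangle sequences by delaying the later activator by $O(1)$ steps; both adjustments change $|B_n|$ by only $O(n)$ and so leave the limiting density equal to $\epsilon$.
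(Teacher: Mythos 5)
Your overall architecture (split $S_n$ into four quadrants, use the always-burned central $L_1$-ball to decouple them, average the four quadrant densities) is the same as the paper's, and your non-interference computation is fine. The genuine gap is the scheduling: by Definition~\ref{def:burn-seq} at most \emph{one} vertex is activated per time step in the whole grid, so the four per-quadrant sequences share a budget of one activation per step. Your primary instance $d_1=\dots=d_4=\epsilon$ runs Lemma~\ref{lem:skinny-triangle} with the same $\rho=2\epsilon-1$ in every quadrant, and that construction demands an activation in each quadrant at exactly the times $n$ with $\lfloor\rho n\rfloor>\lfloor\rho(n-1)\rfloor$ --- the \emph{same} times in all four quadrants, for a total demanded rate of $4\rho$. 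These collisions are not rare, and for $\epsilon>\frac58$ (so $4\rho>1$) they cannot be absorbed by $O(1)$ delays: the backlog grows linearly, each quadrant is actually served at rate at most $\tfrac14$, and since every skinny-triangle activator is placed on the top edge adjacent to the existing burned region, each contributes at most one new diagonal strip, so a quadrant served at rate $r_i$ has density about $\tfrac12(1+r_i)$ and the overall density is at most $\tfrac12+\tfrac18\sum r_i\le\tfrac58$. Thus your construction, as instantiated, fails for every $\epsilon\in(\tfrac58,1)$; note this has nothing to do with skip-freeness, since even with $\bullet$'s allowed only one quadrant can be activated at any given step.

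The paper's proof exists precisely to handle this capacity constraint: it writes $\epsilon=\tfrac12+\tfrac{a+\rho}8$ with $a\in\{0,1,2,3,4\}$ and $\rho\in[0,1)$, runs the rate-$\rho$ construction of Lemma~\ref{lem:skinny-triangle} in \emph{at most one} quadrant, gives $a$ quadrants density $1$ via Lemma~\ref{lem:layer-cake} (which is flexible --- it needs only bounded gaps, and the skinny-triangle sequence skips at least once every $\lceil(1-\rho)^{-1}\rceil$ steps, so the leftover steps served round-robin suffice), and gives the remaining quadrants density $\tfrac12$ via Lemma~\ref{lem:density0} (which tolerates arbitrary skipping). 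Your closing remark that one may instead pick unequal $d_i$ averaging to $\epsilon$ points in the right direction, but you give no analysis of which tuples $(d_1,\dots,d_4)$ are actually schedulable within one activation per step; that analysis, i.e.\ the choice of $a$ and $\rho$ and the round-robin assignment of the skipped steps, is the substance of the paper's argument and is what is missing from yours. (Your last paragraph about filling unused steps with throwaway activators near the boundary of the central ball concerns only the optional skip-free refinement and does not repair the capacity problem above.)
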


\begin{proof}
The construction of sequences of activated vertices is given by choosing separate sequences for each of the four quadrants: $[0, n]^2$, $[-n, 0] \times [0, n]$, $[-n, 0]^2$, and $[0, n] \times [-n, 0]$.  The sequences are chosen so that at each time step, exactly one quadrant has an activated vertex.  Except for the first activated vertex $v_0 = (0, 0)$, there are no other activated vertices chosen on the overlapping regions between these quadrants.  The region of overlap, $[-n, n] \times \{0\} \cup \{0\} \times [-n, n]$, has density tending to $0$ in the square grid $S_n$.  Thus, the total limiting density of the sequence $\underline{v}$ is the average density of the restriction of this sequence to each of the four quadrants.

Let $a \in \{0,1, 2, 3, 4\}$ and $0 \leq \rho < 1$ be such that $\epsilon = \frac{1}{2} + \frac{a + \rho}{8}$.  The values of $a$ and $\rho$ will determine the type of construction used.

If $\rho = 0$, then burn a vertex in each quadrant every fourth step as follows.  For $a$ of the quadrants, choose activated vertices according to Lemma~\ref{lem:layer-cake} and for the remaining quadrants, activate vertices according to Lemma \ref{lem:density0}.

If $\rho > 0$, then $a \in \{0, 1, 2, 3\}$.  We produce a sequence $\underline{u}$ (containing $\bullet$'s) using Lemma \ref{lem:skinny-triangle}, which is constrained to one quadrant and has density $\frac{1}{2} (1+\rho)$ in that quadrant. Let $\{ a_i \} _{i=0}^\infty$ be the sequence of indices $a_i$ such that $u_{a_i} = \bullet$.  It remains to define three sequences of activated vertices for the remaining quadrants so that for every $i$, a vertex is activated in exactly one of these at time $a_i$ and with the property that $a$ of the quadrants have burning density tending to $1$ and the remaining have burning density tending to $1/2$.   For every $i$, at time $a_i$, activate a vertex in the first such quadrant if $i \equiv 0 \pmod{3}$, in the second if $i \equiv 1 \pmod{3}$, and in the third, otherwise.  Since none of these three quadrants are burned on consecutive steps, the conditions of Lemma \ref{lem:density0} are satisfied for quadrants of density $1/2$. Since $\underline{u}$ contains at least one $\bullet$ every $\left \lceil (1-\rho)^{-1} \right \rceil$ steps, each of these three quadrants are burned at least once every $3 \left \lceil (1-\rho)^{-1} \right \rceil$ steps, so that the conditions of Lemma \ref{lem:layer-cake} are satisfied for quadrants of density $1$. Combining the burning sequences of each quadrant, we then have the desired sequence $\underline{v}$. \end{proof}

Next, it is shown that burning density $1$ can be realized for any growth function $f(n) = cn$, where $c$ is a positive constant.

\begin{theorem}\label{thm:growth-by-c-density1}
For any constant $c \geq 1$, let $\underline{S}$ be the sequence of square grids such that $S_n = [ - \lceil cn \rceil , \lceil cn \rceil ]^2$. Then there is a sequence of activated vertices $\underline{v}$ with burning density $\delta (\underline{S},\underline{v} ) = 1$.
\end{theorem}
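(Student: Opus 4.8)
The plan is to exhibit a sequence of activated vertices (using no $\bullet$) under which, at every time $n$, the set $B_n$ contains all of $S_n$ except a boundary annulus of width $O(\sqrt n)$. Since such an annulus contains $O(n^{3/2})=o(|V(S_n)|)$ vertices, and always $|B_n|\le |V(S_n)|$, this gives $|B_n|/|V(S_n)|\ge 1-O(n^{-1/2})\to 1$, hence $\delta(\underline S,\underline v)=1$. Note also that the central activator $(0,0)$ already secures density $\tfrac1{2c^2}$, and it suffices to burn everything outside an $o(n)$-wide boundary annulus.

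The construction: activate $(0,0)$ at time $0$; for a large constant $C=C(c)$, partition the remaining steps into consecutive blocks, the block starting at time $t$ having length $\lceil C\sqrt t\rceil$. During such a block the grid grows from $S_t$ to $S_{t+\Theta(\sqrt t)}$, so its boundary — a square of perimeter $\Theta(t)$ — advances outward by $\Theta(\sqrt t)$. Use the $\Theta(\sqrt t)$ steps of the block to activate $\Theta(\sqrt t)$ vertices, each on the boundary of the grid at the moment it is activated, with their angular positions spread (spacing $\Theta(\sqrt t)$) around the boundary square; thus over the block the activations perform one "sweep" of the advancing boundary at angular resolution $\Theta(\sqrt t)$. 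As observed above, validity of the sequence is irrelevant to the burning density — an activation at an already-burned vertex only helps — so we do not insist on it (alternatively one may replace any invalid choice by a nearby unburned boundary vertex).

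For the analysis, an activator chosen at time $s$ is the centre of an $L_1$-ball of radius $n-s$ at every time $n\ge s$. The key claim is that, for $C$ large enough, there is a constant $K=K(c)$ such that every $z\in\mathbb Z^2$ is burned by time $\lceil \|z\|_\infty/c\rceil+K\lceil\sqrt{\|z\|_\infty+1}\rceil$. Indeed, with $m=\|z\|_\infty$, consider the block whose sweep passes through the layers near $m$: it starts at a time $t=m/c+O(\sqrt m)$, has length $\Theta(\sqrt m)$, and its $\Theta(\sqrt m)$ activators form an $O(\sqrt m)$-net of the $\Theta(m^{3/2})$-point annulus of radii it sweeps. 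Hence some such activator $q$ has $d_1(q,z)=O(\sqrt m)$ and was activated at time $s=t+O(\sqrt m)=m/c+O(\sqrt m)$, and its ball has, by time $s+d_1(q,z)=m/c+O(\sqrt m)$, grown to contain $z$ — here one uses $C$ large enough that the net spacing never exceeds the radius the balls attain within the block together with the following one. The claim gives $B_n\supseteq V(S_{n-O(\sqrt n)})$, and the density computation above concludes the proof.

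The main obstacle is that two fronts advance at incommensurable rates: the grid boundary moves out at rate $c>1$, while the burned ball around any fixed activator moves out only at rate $1$, so no bounded family of activators can keep the boundary layer burned — it must be re-covered by fresh activators forever, and since exactly one vertex is activated per step, a full re-covering of the boundary square at angular resolution $g$ costs $\Theta(t/g)$ steps at time $t$. Too small a $g$ makes each sweep too slow; too large a $g$ leaves the grown balls unable to close the angular gaps. The balance point is $g=\Theta(\sqrt t)$, where a sweep takes $\Theta(\sqrt t)$ steps and within that many (plus the next block's) the balls reach radius $\Theta(\sqrt t)\ge g$ and so do cover. Making every constant fit at once — net spacing, block length, the radius the balls attain, and the $O(1)$-layer slop between consecutive blocks and around the corners of the square boundary — is the delicate bookkeeping; the cost of the balance is an irreducible defect annulus of width $\Theta(\sqrt n)$, but this has density $O(n^{-1/2})\to 0$, so the limit is still $1$.
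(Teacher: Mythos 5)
Your proposal is correct and is essentially the paper's own argument: the paper likewise places activators on the moving boundary (along the outer edge of each of four triangular regions, one region every fourth step) with successive horizontal gaps $\lceil\sqrt t\rceil$ that wrap around, so each full sweep takes $\Theta(\sqrt t)$ steps, every point at radius $m$ is burned within $O(\sqrt m)$ of the time $\approx m/c$ at which the boundary passes it, and the unburned set is a boundary band of $o(n^2)$ vertices (the paper tolerates width $O(N^{2/3})$ rather than your $O(\sqrt N)$, which only changes the $o(1)$ error). The differences are purely organizational --- your explicit whole-boundary blocks versus the paper's per-region running sweep --- and do not affect correctness.
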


\begin{proof}
The sequence of activated vertices is defined so that, in alternating time steps, the newly activated vertex is placed in one of four different regions.  The sequence of activated vertices in the region $y \geq |x|$ is defined here and the complete sequence is obtained by three rotations of these points, staggered at distinct times modulo $4$.

In the region $y \geq |x|$, the activated vertices form the subsequence $(v_{4t})_{t \geq 0}$.  At each time step $n$ that is divisible by $4$, an activated vertex is placed along the top-most edge of the grid, the line $y = \lceil cn \rceil$, with the $x$-coordinates of the activated vertices changing in the following recursive way.  Set $v_0 = (0, 0)$ and for every $t \geq 1$, given $v_{4(t-1)} = (x_{4(t-1)}, \lceil 4c(t-1) \rceil)$, define
\[
v_{4t} =
\begin{cases}
	(x_{4(t-1)} + \lceil \sqrt{t} \rceil, \lceil 4ct \rceil)	&\text{if $x_{4(t-1)} + \lceil \sqrt{t} \rceil \leq \lceil 4ct \rceil$}\\
	(x_{4(t-1)} + \lceil \sqrt{t} \rceil - 2\lceil 4ct \rceil, \lceil 4ct \rceil)	&\text{otherwise}.
\end{cases}
\]
That is, the $x$-coordinate of $v_{4t}$ is $\lceil \sqrt{t} \rceil$ more than the $x$-coordinate of $v_{4(t-1)}$, unless such a point would be outside the region $y \geq |x|$, in which case the gap of $\lceil \sqrt{t} \rceil$ `wraps around' to the left-hand boundary of the region.

It is shown in this proof that there is a constant $C_0$ so that for any time $N$, that is sufficiently large, any point in the region defined by the equations $y \geq |x| + C_0 \sqrt{N}$ and $y \leq \lceil c N \rceil - C_0 N^{2/3}$ is burned.  As this smaller region contains all but at most $2C_0 c N \sqrt{N} + C_0 cN\cdot N^{2/3} = o(N^2)$ vertices from the entire region of the graph defined by $y \geq |x|$ and $y \leq \lceil cN \rceil$, this shows that the burning density of the sequence in this triangular region is $1$.

Set $C_0 = 60 c^2$ and fix any vertex $P = (a, b)$ in the region defined by $y \geq |x| + C_0 \sqrt{N}$ and $y \leq \lceil cN \rceil - C_0 N^{2/3}$.  If $b \leq N/2$, then $d(P, v_0) \leq 2(N/2) = N$ and so at time $N$, the vertex $P$ is burned by the first activated vertex $v_0$.

Therefore, suppose that $b > N/2$.  We will show that there is an activated vertex with $x$-coordinate within $\sqrt{N}$ of $a$ that is close enough to $P$ to burn it by time $N$.

Set $t_ 0 = \lfloor b/4c \rfloor$ so that the activated vertex $v_{4t_0}$ is the one with the largest $y$-coordinate that is at most $b$.

Consider the vertices $v_{4t}$ with $t_0 - 15c \sqrt{t_0} + 1 \leq t \leq t_0$.  Note that since $b > N/2$, then for $N$ large enough, $t_0 - 15 c \sqrt{t_0} + 1 \geq 1$.  For any such $t$, the region $y \geq |x|$ contains the point $(a, \lceil 4ct \rceil)$ (with the same $y$-coordinate as $v_{4t}$) since
\begin{align*}
|a| &\leq  b - 60c^2 \sqrt{N}\\
	&\leq b- 60 c^2 \sqrt{t_0}\\
	&\leq 4c(t_0 + 1) - 60 c^2 \sqrt{t_0}\\
	&=4c(t_0 - 15c\sqrt{t_0} + 1)\\
	&\leq 4ct \leq \lceil 4ct \rceil.
\end{align*}

Setting $t_1 = \lfloor t_0 - 15 c \sqrt{t_0} + 1 \rfloor$, the horizontal gaps, measured from left to right cyclically, between the vertices $v_{4t_1}, v_{4(t_1+1)}, \ldots, v_{4t_0}$ are, respectively, $\lceil \sqrt{t_1 + 1} \rceil, \lceil \sqrt{t_1 + 2} \rceil, \ldots, \lceil \sqrt{t_0} \rceil$.  Since the sum of these gaps is
\begin{align*}
\lceil \sqrt{t_1 + 1} \rceil + &\lceil \sqrt{t_1 + 2} \rceil + \ldots + \lceil \sqrt{t_0} \rceil
	\geq \int_{t_1}^{t_0} \sqrt{x}\ dx \geq \int_{t_0 - 15c\sqrt{t_0} + 1}^{t_0} \sqrt{x}\ dx\\
	&=\frac{2}{3}\left( t_0^{3/2} - (t_0 - 15c\sqrt{t_0} + 1)^{3/2} \right)\\
	&\geq \frac{2}{3}\left( t_0^{3/2} - \sqrt{t_0}(t_0 - 15c\sqrt{t_0} + 1) \right)\\
	& = 10ct_0 - \frac{2}{3}\sqrt{t_0}\\
	&\geq 8ct_0 + 2 = 2(4ct_0 + 1) \geq 2\lceil 4ct_0 \rceil ,
\end{align*}
which is the maximum width of any horizontal line within the region containing $v_{4t}$ with $t \in [t_0 - 15 c \sqrt{t_0} + 1, t_0]$.  Note that the final line of the above inequality follows as long as $N$, and hence $t_0$, is sufficiently large.

Since each of the horizontal gaps between consecutive activated vertices in this region are all at most $\sqrt{t_0}$, there is some $T$ with $t_0 - 15c\sqrt{t_0} + 1 \leq T \leq t_0$ so that if $v_{4T} = (x_{4T}, \lceil 4cT\rceil)$, then $|x_{4T} - a| \leq \sqrt{t_0}$.  For this $T$, we have that
\begin{align*}
d(P, v_{4T})
	&=|x_{4T}-a| + |\lceil 4cT \rceil - b|\\
	&\leq \sqrt{t_0} + (4c(t_0+1) - 4c(t_0 - 15c_0\sqrt{t_0} + 1)) = (1+60c^2)\sqrt{t_0}.
\end{align*}
Thus, $P$ will be burned by the activated vertex $v_{4T}$ at time
\begin{align*}
4T + d(P, v_{4T})
	&\leq 4t_0 + (1+60c^2)\sqrt{t_0}\\
	&\leq \frac{1}{c}\left( \lceil cN \rceil - 60c^2 N^{2/3} \right) + (1+60c^2)\sqrt{N}\\
	&\leq N + 1 - 60 c N^{2/3} + (1+60c^2)\sqrt{N}  < N.
\end{align*}
Thus, at time $N$, $P$ is burned.

Therefore, at time $N$, every vertex in the quadrant is burned, except for possibly some in the three boundary strips of width at most $C_0cN^{2/3}$.  The number of unburned vertices in a quadrant is at most $3C_0 cN^{5/3}$ and so, considering all four regions and activating one new vertex in each time step gives at most $12 C_0 cN^{5/3}$ unburned vertices and so
\[
\frac{|B_N|}{|\left[-\lceil cN \rceil, \lceil cN \rceil \right]^2 |} \geq 1 - \frac{12 c C_0 N^{5/3}}{(2\lceil cN \rceil + 1)^2} = 1 - o(1).
\]
Thus, the limiting burning density of the sequence $(v_t)_{t \geq 0}$ in the sequence of grids $([-\lceil cN \rceil, \lceil cN \rceil]^2)_{N \geq 0}$ is $1$.
\end{proof}

Theorem~\ref{thm:growth-by-c-density1}, in combination with the fact that every density in $[\frac{1}{2},1]$ can be achieved when $c=1$, shows that for any constant $c > 1$, any density in the interval $[1/2c^2,1]$ is attainable in the sequence of grids $( [ - \lceil cn \rceil , \lceil cn \rceil ]^2)_{n \geq 1}$.  Indeed, for any density in $\left[ \frac{1}{2c^2}, \frac{1}{c^2}\right]$, chose activated vertices only inside the central square grids $([-n, n]^2)_{n \geq 1}$, according to Theorem~\ref{thm:growth-by-one}.  At every time $n$, all of these vertices and their burned neighbours will themselves be inside the grid $[-n, n]^2$.  To achieve any density $\rho \in \left[ \frac{1}{c^2}, 1 \right]$, set $d = c\sqrt{\rho}$ and apply Theorem~\ref{thm:growth-by-c-density1} to the central sequence of square grids $([-\lceil dn \rceil, \lceil dn \rceil]^2)_{n \geq 1}$.  This completes the proof of Theorem~\ref{thm:growth-by-c}.

\section{Fast growing grids}\label{sec:fast-growth}

In this section, the proof of Theorem~\ref{thm:growth-by-rootn} is given. Throughout the following proofs, multiple distances in the plane are used.  Let $d_p (P,Q)$ denote $L_p$ distance between points $P$ and $Q$.  By the Cauchy-Schwarz inequality, $d_1(P, Q) \leq \sqrt{2} d_2(P, Q)$.  Also, for $P \in \mathbb{R} ^2$, let $P^*$ be the point in $\mathbb{Z} ^2$ such that $\| P^* \| _{1} \leq \| P \| _{1}$ and $d_1 (P,P^*)$ is minimum; this is essentially the nearest vertex between $P$ and the origin.  By the triangle inequality, $d_1(P, Q^*) \leq d_1(P, Q) + 2$.

Polar coordinates are used throughout the proof of Theorem~\ref{thm:growth-by-rootn}.  In polar coordinates, the pair $(r, \theta)$ denotes the point at distance $r$ from the origin, at angle $\theta$ from the positive $x$-axis.

For reference, we re-state Theorem~\ref{thm:growth-by-rootn} before we proceed to its proof.

\begin{thm-rootn}
For any positive constant $c$, let $\underline{S}$ be the sequence of square grids such that $S_n = \left [ - \lceil cn^{3/2} \rceil, \lceil cn^{3/2} \rceil \right ]^2$. Then there is a sequence of activated vertices $\underline{v}$ with lower burning density $\underline{\delta} (\underline{S},\underline{v}) >0$.
\end{thm-rootn}

\begin{proof}
The goal is to show that, with a suitable choice of activated vertices, for sufficiently large $N$ there is a disk of radius $\Omega (N^{3/2})$ in the set of burned vertices at time $N$, denoted by $B_N$. Points close to the origin will be burned by the first activated vertex (that is, the origin), while the vast majority will be burned by another activated vertex within distance $O(N)$.

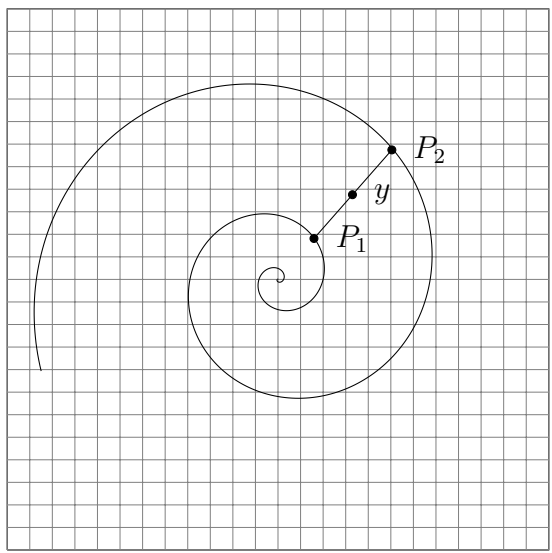
\begin{figure}[htb]
\begin{center}
\begin{tikzpicture}[scale = 0.3]
\draw (-12, -12) -- (12, -12) -- (12, 12) -- (-12, 12) -- (-12, -12);
\draw[help lines] (-12, -12) grid (12, 12);
\draw[domain=0:500,samples=500] plot ({deg(sqrt(\x))}:{0.001*(\x)^(3/2)});
\node[label=right:{$P_1$}] (p1) at ({deg(sqrt(180))}:2.415) {};
\draw[fill=black] ({deg(sqrt(180))}:2.415) circle (5pt);

\node[label=right:{$P_2$}] (p2) at ({deg(sqrt(180))}:7.645) {};
\draw[fill=black] ({deg(sqrt(180))}:7.645) circle (5pt);

\draw ({deg(sqrt(180))}:2.415) -- ({deg(sqrt(180))}:7.645) ;

\node[label=right:{$y$}] at ({deg(sqrt(180))}:5) {};
\draw[fill=black] ({deg(sqrt(180))}:5) circle (5pt);
\end{tikzpicture}
\end{center}
\caption{Spiral of activated vertices.}
\label{fig:spiral-points}
\end{figure}

For all $n \geq 0$, define $v_n = \left( cn^{3/2},\sqrt{n} \right)^*$, in polar coordinates, and let $\underline{v} = (v_n)_{n \geq 0}$ be the sequence of activated vertices.  Assume that $N \geq 600c$.  It is shown that, given the sequence $\underline{v}$, at time $N$, a positive fraction of the vertices, not depending on $N$, in the grid $S_N$ are burned.

Let $y=(d,\theta)$ be an arbitrary vertex in $S_N$, written in polar coordinates.  If $d < 400 c$, then $d_1(y, \underline{0}) \leq \sqrt{2}d < 600c \leq N$ and so $y$ is contained in the ball of burned vertices centred at the origin.

Suppose now that $y = (d, \theta)$ with $d \geq 400c$.  To find a point in $\underline{v}$ that is close to $y$, we will use the spiral defined in polar coordinates by $\left( ct^{3/2},\sqrt{t} \right)$ for reference. Let $P_1= \left(c{t_1}^{3/2},\sqrt{t_1} \right)$ and $P_2= \left(c{t_2}^{3/2},\sqrt{t_2} \right)$ be the points on this spiral that satisfy $\theta \equiv \sqrt{t_1} \equiv \sqrt{t_2} \pmod{2 \pi}$ and $ct_1^{3/2} \leq d < ct_2^{3/2}$, as in Figure~\ref{fig:spiral-points}.  Without loss of generality, assume that $\sqrt{t_1} = \theta$ and $\sqrt{t_2} = \theta + 2\pi$.

Since $d \geq 400c$, then $400c < ct_2^{3/2} = c(\sqrt{t_1} + 2\pi)^3$ and so $t_1 \geq (400^{1/3} - 2\pi)^2 > 1$. Furthermore, since $\theta =\sqrt{t_1}$, we have that $P_1 =(c \theta ^3,\theta)$ and $P_2 =(c (\theta +2 \pi) ^3,\theta +2 \pi)$.  Thus,
\begin{align}
d_2 (y,P_1)
	& \leq d_2 (P_1,P_2)  \notag\\
	&= c(\theta +2 \pi)^3 -c \theta^3 \notag\\
	&=c\theta^3 + 6c\theta^2 \pi+ 12c\theta\pi^2 + 8c\pi^3 - c \theta^3 \notag\\
	&= 6c \theta^2 \pi+12c \theta \pi^2+8c \pi^3 \notag\\
	&=6c \pi t_1+12c \pi^2 \sqrt{t_1}+8c \pi^3  \notag\\
	& \leq 386c t_1 \leq 386c \left(\frac{d}{c} \right)^{2/3}, \label{eq:dist-y-p1}
\end{align}
where the fourth equality follows since $\theta = \sqrt{t_1}$, and the inequalities in the final line follow since $1 \leq t_1 \leq (d/c)^{2/3}$.

Now let $P_3 = \left( c \lfloor t_1 \rfloor ^{3/2},\sqrt{\lfloor t_1 \rfloor} \right)$ so that $P_3^* = v_{\lfloor t_1 \rfloor}$.  The distance between $P_1$ and $P_3$ is bounded by the sum of length of the arc of the circle centred at the origin between the point $P_1$ and the point $(c t_1^{3/2}, \sqrt{\lfloor t_1 \rfloor})$ and the distance between $P_3$ and this point.  We derive that
\begin{align}
d_2 (P_1,P_3)
&\leq ct_1^{3/2} \left(\sqrt{t_1}-\sqrt{\lfloor{t_1}\rfloor} \right) + (ct_1^{3/2} - c \lfloor t_1 \rfloor^{3/2} ) \notag \\
 &\leq ct_1^{3/2} \left( \sqrt{t_1}-\sqrt{t_1 - 1}\right)  + (ct_1^{3/2} - c(t_1-1)^{3/2} ) \notag \\
&\leq c \frac{t_1^{3/2}}{\sqrt{t_1}} + \frac{3c}{2} \sqrt{t_1}
\leq c\left(1 + \frac{3}{2}\right) t_1
\leq \frac{5c}{2} \left(\frac{d}{c} \right)^{2/3}  , \label{eq:dist-p1-p3}
\end{align}
where the final inequality follows since $1 \leq t_1 \leq (d/c)^{2/3}$.

These estimates are now used to bound the $L_1$ distances between the points $y, P_1^*$, and $P_3^* = v_{\lfloor t_1 \rfloor}$.  Indeed, the distance between $y$ and $P_1^*$ is
\begin{align}
d_1(y, P_1^*)
	&\leq d_1(y, P_1) + 2 \notag\\
	&\leq \sqrt{2}d_2(y, P_1) + 2 \notag\\
	&\leq 386c \sqrt{2} \left(\frac{d}{c} \right)^{2/3} +2 \label{eq:dist-p1*-p2*}\\
	&\leq 546c\left(\frac{d}{c}\right)^{2/3} + 2, \notag
\end{align}
where the third inequality follows by \eqref{eq:dist-y-p1}.

Analogously, we find that
\begin{align}
d_1 (P_1^*,v_{\lfloor t_1 \rfloor}) & = d_1(P_1^*, P_3^*) \notag\\
	&\leq d_1(P_1, P_3) + 4\\
	&\leq \sqrt{2} d_2(P_1, P_3)  + 4	\notag\\
	&\leq c\sqrt{2}\frac{5}{2} \left(\frac{d}{c} \right)^{2/3} +4  \notag\\
	&\leq 4c  \left(\frac{d}{c} \right)^{2/3} +4, \label{eq:dist-p1*-p3*}
\end{align}
where the third inequality follows by \eqref{eq:dist-p1-p3}.

Summing the bounds in Equations~\eqref{eq:dist-p1*-p2*} and \eqref{eq:dist-p1*-p3*} gives
\begin{align*}
d_1 (y,v_{\lfloor t_1 \rfloor})
& \leq d_1 (y, P_1^*) + d_1 (P_1^*,v_{\lfloor t_1 \rfloor}) \\
& \leq 550c \left(\frac{d}{c} \right)^{2/3} +6.
\end{align*}
Since $v_{\lfloor t_1 \rfloor}$ was activated by time $\left(\frac{d}{c} \right)^{2/3}$, then $y \in B_N$ if $d_1(v_{\lfloor t_1 \rfloor}, y) \leq N - \left(\frac{d}{c} \right)^{2/3}$, which occurs if
\[
550c \left(\frac{d}{c} \right)^{2/3} +6 \leq N - \left(\frac{d}{c} \right)^{2/3}
 \]
 or equivalently if $d \leq c \left(\frac{N-6}{550c +1}\right)^{3/2}$.  Thus, for $N \geq 600c$, $B_N$ contains all points within $L_2$ distance $c \left( \frac{N-6}{600c +1} \right) ^{3/2}$ from the origin, and so $B_N$ also contains all points at $L_1$ distance at most $c \left( \frac{N-6}{600c +1} \right) ^{3/2}$ from the origin.  Therefore, $\underline{\delta} (\underline{v},\underline{x}) \geq  \frac{1}{8}(600c  +1)^{-3}$. \end{proof}

The construction given in this proof can also be used to show that if either $f(n) = \Theta(n^{3/2})$ or $f(n+1) - f(n) = O(\sqrt{n})$, then positive burning densities are possible in the sequence of graphs $([-f(n), f(n)])_{n \geq 0}$.  However, it need not be true that if $f(n) = O(n^{3/2})$ that positive burning densities are achievable.  Some such examples are detailed in Section~\ref{sec:open}.

%
%
%
%

\section{Higher dimensional grids}\label{sec:d-dim}

In this section, it is shown that there is a threshold result generalizing Theorem~\ref{thm:growth-by-rootn} to positive burning densities in arbitrary dimension grids, depending on their speed of growth.  First it is shown that faster growing grids have burning density $0$.  Theorem~\ref{thm:fast-growth} is restated for convenience.

\begin{thm-fast}
For any $d \geq 2$, let $f:\mathbb{N}_0 \to \mathbb{N}_0$ be a function with $f(n) = \omega(n^{(d+1)/d})$.  For every $n \geq 0$, let $S_n = [-f(n), f(n)]^d$, and let $\underline{S} = (S_n)_{n \geq 0}$.  Then, for every sequence of activated vertices $\underline{v}$ the burning density is $\delta(\underline{S}, \underline{v}) = 0$.
\end{thm-fast}

\begin{proof}
To prove the theorem, some bounds on the cardinalities of $d$-dimensional balls are needed.  For any $k \geq 0$, the number of solutions to $\sum_{i = 1}^d v_i = k$ with $v_i \in [0, k]$ is $\binom{k + d-1}{d-1}$ and so the number of solutions to $\sum_{i = 1}^d v_i \leq k$ is
\[
\sum_{i = 0}^{k} \binom{i+d-1}{d-1} = \binom{k+d}{d}.
\]
Therefore, considering the possible signs in the case when $v_i \in [-k, k]$,
\begin{equation}\label{eq:ub-L1-ball}
|B_1(\underline{0}, k)| \leq 2^d \binom{k+d}{d}.
\end{equation}

Let $\underline{v}$ be any sequence of activated vertices in the sequence of graphs $(S_n)_{n \geq 0}$.  For any $n$, $B_n$ is bounded by
\begin{align}
|B_n|
	&\leq \sum_{k = 0}^n |B_1(v_k, n-k)| \notag\\
	&\leq \sum_{k = 0}^n 2^d \binom{k+d}{d} \notag\\
	&= 2^d \binom{n+d+1}{d+1} \notag\\
	&\leq 2^d (n+d+1)^{d+1}. \label{eq:d-ball-bd}
\end{align}

Therefore, the burning density satisfies
\begin{align*}
\delta(\underline{S}, \underline{v})
	& = \lim_{n \to \infty} \frac{|B_n|}{(2f(n) + 1)^d}\\
	&\leq \lim_{n \to \infty} \frac{2^d (n+d+1)^{d+1}}{2^d f(n)^d}	\\
	&=\lim_{n \to \infty} \frac{(n+d+1)^{d+1}}{\omega(n^{d+1})}		\\
	&=0,
\end{align*}
where the first inequality follows by \eqref{eq:d-ball-bd}, and the second equality follows by the definition of $f(n)$.

Thus, for any sequence $\underline{v}$, $\delta(\underline{S}, \underline{v}) = 0$. \end{proof}

What remains is to prove the corresponding result for positive densities in slower growing grids.  For the proofs to come, some estimates on the cardinalities of balls of a given radius in $\mathbb{Z}^d$, their vertex boundaries, and their intersections with other balls are needed.  These are given first in Lemmas~\ref{lem:d-ball-vol-area} and \ref{lem:d-ball-sphere-intersec} before proceeding with the proof of the next threshold result, Theorem~\ref{thm:d-dim-pos-dens}.  For any point $P \in \mathbb{Z}^d$ and integer $x$, let $S_1(P, x)$ be the set of points at $L_1$-distance exactly $x$ from $P$; the sphere of radius $x$.

\begin{lemma}\label{lem:d-ball-vol-area}
For any $d \geq 2$ and $r \geq d+1$, the number of vertices in an $L_1$ ball of radius $r$ satisfies
\[
\frac{|B_1(\underline{0}, r)|}{|[-r, r]^d|} \geq \frac{1}{2^d d^d}.
\]
Furthermore, the number of vertices at $L_1$ distance exactly $r$ from the origin is bounded above and below as
\[
\frac{2^d}{(d-1)!}(r-d+1)^{d-1} \leq |S_1(\underline{0}, r)| \leq \frac{2^d}{(d-1)!}(r+d-1)^{d-1}.
\]
\end{lemma}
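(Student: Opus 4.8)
The plan is to count lattice points in $L_1$-balls directly via the compositions identity already established in the proof of Theorem~\ref{thm:fast-growth}. Recall that the number of solutions to $\sum_{i=1}^d v_i = k$ with each $v_i \in [0,k]$ is $\binom{k+d-1}{d-1}$, hence $|S_1(\underline{0},r)|$ equals the number of $(\varepsilon, v) \in \{\pm 1\}^d \times \mathbb{Z}_{\geq 0}^d$ with $\sum v_i = r$, except that coordinates equal to $0$ should not be double-counted over signs. Rather than track this inclusion–exclusion exactly, I would bound $|S_1(\underline{0},r)|$ above by $2^d \binom{r+d-1}{d-1}$ (allowing all sign patterns freely) and below by $2^d \binom{r-1}{d-1}$ (forcing every $v_i \geq 1$, so all $2^d$ sign choices are genuinely distinct; the count of such solutions is the number of compositions of $r$ into $d$ positive parts, which is $\binom{r-1}{d-1}$). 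Then I would use the elementary estimates $\binom{m}{d-1} \leq \frac{m^{d-1}}{(d-1)!}$ together with $\binom{r-1}{d-1} \geq \frac{(r-d+1)^{d-1}}{(d-1)!}$, the latter valid for $r \geq d$ since $\binom{r-1}{d-1} = \frac{(r-1)(r-2)\cdots(r-d+1)}{(d-1)!} \geq \frac{(r-d+1)^{d-1}}{(d-1)!}$. Matching this to the claimed bounds with $(r \pm d-1)^{d-1}$ (which are weaker than $(r-d+1)^{d-1}$ on the lower side and $(r+d-1)^{d-1}$ on the upper side) finishes the sphere estimate.

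For the volume ratio, I would sum the sphere lower bound: $|B_1(\underline{0},r)| = \sum_{x=0}^r |S_1(\underline{0},x)| \geq \sum_{x=d-1}^{r} \frac{2^d}{(d-1)!}(x-d+1)^{d-1} = \frac{2^d}{(d-1)!}\sum_{j=0}^{r-d+1} j^{d-1} \geq \frac{2^d}{(d-1)!}\cdot \frac{(r-d+1)^{d}}{d} = \frac{2^d (r-d+1)^d}{d!}$, using $\sum_{j=0}^{m} j^{d-1} \geq \int_0^m x^{d-1}\,dx = m^d/d$. Since $|[-r,r]^d| = (2r+1)^d$, the ratio is at least $\frac{2^d (r-d+1)^d}{d!\,(2r+1)^d}$, and for $r \geq d+1$ one has $r-d+1 \geq \frac{r+1}{2}$ (equivalently $r \geq 2d-1$… wait — for $r = d+1$ we get $r-d+1 = 2$ and $\frac{r+1}{2} = \frac{d+2}{2}$, which fails for $d \geq 3$). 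So instead I would bound more crudely: $r - d + 1 \geq 1$ is too weak, so the cleaner route is to note $\frac{r-d+1}{2r+1} \geq \frac{1}{2d}$ precisely when $d(r-d+1) \geq 2r+1$, i.e. $r(d-2) \geq d^2 - d + 1$; for $d \geq 3$ this needs $r \geq \frac{d^2-d+1}{d-2}$, which is larger than $d+1$ in general, so this approach does not match the stated hypothesis $r \geq d+1$ either. The fix is to keep $d!$ rather than $d^d$ on the way and compare differently: I would instead show $|B_1(\underline{0},r)| \geq \frac{(2r+1)^d}{2^d d^d}$ by the probabilistic/averaging observation that a uniformly random point of $[-r,r]^d$ lies in $B_1(\underline{0},r)$ with probability at least $1/(2^d d^d)$ — equivalently, partition $[-r,r]^d$ into $(2d)^d$ congruent sub-boxes obtained by splitting each coordinate interval $[-r,r]$ into $2d$ pieces of length roughly $r/d$; wait, that gives pieces of length $\tfrac{2r+1}{2d}$, and the sub-box closest to the origin is entirely contained in $B_1(\underline{0},r)$ since its $L_1$-diameter from the origin is at most $d \cdot \tfrac{2r+1}{2d} \cdot$(something)$\,\leq r$ for $r$ not too small — I would check the constant carefully, and if the corner box needs $r \geq d+1$ to fit, that is exactly the stated hypothesis.

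Concretely, for the volume bound the cleanest argument I would write is: the central sub-box $\prod_{i=1}^d [a_i, b_i]$ where each $[a_i,b_i]$ is the middle-most interval of length $\lceil (2r+1)/(2d) \rceil$ in a partition of $\{-r,\dots,r\}$ into $2d$ nearly-equal blocks has every point at $L_1$-distance at most $\sum_i \max(|a_i|,|b_i|) \leq d \cdot \lceil (2r+1)/(2d)\rceil \leq r$ from the origin when $r \geq d+1$ — here I would pin down that $d\lceil (2r+1)/(2d)\rceil \leq d\big(\tfrac{2r+1}{2d} + 1\big) = r + \tfrac12 + d \leq r$ fails, so the honest statement is that the union of the $\lceil d \rceil$ or so central boxes works, or one simply takes a single block per coordinate of length exactly $\lfloor r/d \rfloor + 1 \geq 2$ and notes it contributes $\geq (r/d)^d \geq (2r+1)^d/(2d)^d$ points once $r \geq d+1$ makes $\lfloor r/d\rfloor + 1 \geq (2r+1)/(2d)$ hold. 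The main obstacle is nailing down these constant factors so that the hypothesis $r \geq d+1$ (rather than some larger threshold) genuinely suffices; everything else is routine binomial estimation. I expect the intended proof uses essentially this block-partition idea for the volume and the "all parts positive'' versus "all sign patterns free'' sandwich for the sphere, and I would present it in that order: sphere bounds first (directly from the compositions count), then the volume bound (by summing the sphere lower bound or by the block argument, whichever gives the clean $1/(2^d d^d)$).
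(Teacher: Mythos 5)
Your sphere bounds are exactly the paper's argument: the upper bound $2^d\binom{r+d-1}{d-1}$ by letting all sign patterns act on weak compositions, the lower bound $2^d\binom{r-1}{d-1}$ by discarding solutions with a zero coordinate, and then the same binomial-to-power estimates $\binom{r+d-1}{d-1}\leq\frac{(r+d-1)^{d-1}}{(d-1)!}$ and $\binom{r-1}{d-1}\geq\frac{(r-d+1)^{d-1}}{(d-1)!}$. For the volume ratio, your final block idea is also the paper's proof, and the constant-checking you left open does close at the stated threshold: the ball contains the cube $[-\lfloor r/d\rfloor,\lfloor r/d\rfloor]^d$, so $|B_1(\underline{0},r)|\geq(2\lfloor r/d\rfloor+1)^d\geq\left(\frac{2r-d}{d}\right)^d$, whence $|B_1(\underline{0},r)|/(2r+1)^d\geq\frac{1}{d^d}\left(\frac{2r-d}{2r+1}\right)^d\geq\frac{1}{2^dd^d}$, because $2(2r-d)\geq 2r+1$ precisely when $r\geq d+\frac12$; this is where $r\geq d+1$ is used. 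Two corrections to your write-up: the chain $(r/d)^d\geq(2r+1)^d/(2d)^d$ is false (it would require $2r\geq 2r+1$), but the other inequality you state, $\lfloor r/d\rfloor+1\geq(2r+1)/(2d)$, in fact holds for all integers $r\geq 0$ (write $r=qd+s$ with $0\leq s\leq d-1$), so counting the box $[0,\lfloor r/d\rfloor]^d\subseteq B_1(\underline{0},r)$ also finishes cleanly; and you were right to abandon the route of summing the sphere lower bound, since the extra factor of $d$ it costs (turning $(d-1)!$ into $d!$) makes that bound fail near $r=d+1$ once $d$ is large, although it would suffice for $r\geq 2d$, say.
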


\begin{proof}
Since a vertex $v=(v_1,v_2,\dots,v_d)$ is in the $L_1$ ball if $\sum |v_i| \leq r$, the ball contains all vertices such that $|v_i| \leq \left \lfloor \frac{r}{d} \right \rfloor$ for all $1 \leq i \leq d$. Thus, the ball contains a cube of width $2 \left \lfloor \frac{r}{d} \right \rfloor +1$, which has volume
\[
\left( 2 \left \lfloor \frac{r}{d} \right \rfloor +1 \right) ^d \geq \left( 2 \frac{r}{d} -1 \right) ^d.
\]
Since the volume of a $d$-dimensional cube of width $2r+1$ is $(2r+1)^d$, for $r \geq d+1$, the ball satisfies
\[
\frac{|B_1(\underline{0}, r)|}{(2r+1)^d} \geq \frac{1}{d^d}\left(\frac{2r-d}{2r+1}\right)^d \geq \frac{1}{d^d 2^d}.
\]

As described in the proof of Theorem~\ref{thm:fast-growth}, the number of vertices in $S_1(\underline{0}, r)$ is the number of integer solutions to $\sum_{i = 1}^d |v_i|=r$ with $v_i \in [-r, r]$.  As before, the upper bound is
\[
|S_1(\underline{0}, r)| \leq 2^d \binom{r+d-1}{d-1} \leq \frac{2^d}{(d-1)!}(r+d-1)^{d-1}.
\]
The lower bound is obtained by disregarding the solutions with some $v_i = 0$, which gives,
\[
|S_1(\underline{0}, r)| \geq 2^d \binom{r-1}{d-1} \geq \frac{2^d}{(d-1)!}(r-d+1)^{d-1}.
\]
\end{proof}

We need the following lemma.

\begin{lemma}\label{lem:d-ball-sphere-intersec}
Let $S= S_1 (\underline{0},x)$, let $P$ be a vertex at distance $r$ from the origin and let $B= B_1 (P,y)$ be the $L_1$ ball of radius $y$ centered at $P$.  If $r \leq x+y$ and $r+x \geq y$, then
\[
|S \cap B| \geq \frac{1}{(d-1)!} \left( \frac{1}{2} (x+y-r) -d \right) ^{d-1}
\]
\end{lemma}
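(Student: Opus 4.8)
The plan is to reduce the $d$-dimensional statement to a one-dimensional count along a suitable axis, exploiting the fact that $L_1$-spheres and $L_1$-balls slice nicely into lower-dimensional $L_1$-spheres. Write $P$ as a vertex at distance $r$ from the origin; by symmetry of the $L_1$-metric under coordinate permutations and sign changes, we may assume $P = (r, 0, \dots, 0)$ (this does not change $|S \cap B|$). Now I would look at the ``slab'' of coordinates $(v_1, \dots, v_d)$ with $v_1$ fixed at some value $j$ between $0$ and $x$: the intersection of $S = S_1(\underline{0}, x)$ with the hyperplane $v_1 = j$ consists of all points whose last $d-1$ coordinates lie on the $(d-1)$-dimensional $L_1$-sphere of radius $x - j$, and the condition for such a point to lie in $B = B_1(P, y)$ becomes $|j - r| + (\text{$L_1$-distance of the tail to }\underline{0}) \le y$, i.e. the tail must lie in the $(d-1)$-dimensional $L_1$-ball of radius $y - |j-r|$ centred at the origin.

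So the first key step is the identity
\[
|S \cap B| = \sum_{j} \bigl| S_1^{(d-1)}(\underline{0}, x-j) \cap B_1^{(d-1)}(\underline{0}, y - |j - r|) \bigr|,
\]
where the sum is over integers $j$ with $0 \le j \le x$ and $|j - r| \le y$, and the superscript $(d-1)$ denotes the lattice $\mathbb{Z}^{d-1}$. For each valid $j$, if $x - j \le y - |j-r|$ then the $(d-1)$-sphere is entirely contained in the $(d-1)$-ball, contributing the full $|S_1^{(d-1)}(\underline{0}, x-j)|$, which is at least $\frac{2^{d-1}}{(d-2)!}(x-j-d+2)^{d-2}$ by the lower bound in Lemma~\ref{lem:d-ball-vol-area} (applied in dimension $d-1$). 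The second key step is therefore to identify a range of $j$ for which both $x - j \le y - |j-r|$ holds and $x - j$ is not too small, and to lower-bound the resulting sum by an integral.

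For the range: the containment $x - j \le y - |j-r|$ is easiest to analyse on the two sides of $j = r$. If $j \le r$ it reads $x - j \le y - r + j$, i.e. $j \ge \frac{1}{2}(x - y + r)$; if $j \ge r$ it reads $j \le \frac{1}{2}(x + y - r)$. Using the hypotheses $r \le x + y$ and $r + x \ge y$ one checks the interval $\bigl[\frac{1}{2}(x-y+r),\, \frac{1}{2}(x+y-r)\bigr]$ is nonempty, has length $y - r$ when this is $\ge 0$ — but the cleaner move is to restrict to $j$ with $r \le j \le \frac{1}{2}(x+y-r)$ (so $j$ is automatically $\ge \frac{1}{2}(x-y+r)$ on the relevant side) and also $j \le x$; one verifies $\frac{1}{2}(x+y-r) \le x$ from $r + x \ge y$. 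On this interval $x - j$ ranges from $x - r$ down to $\frac{1}{2}(x - y + r)$, and summing $\frac{2^{d-1}}{(d-2)!}(x - j - d + 2)^{d-2}$ over these integer $j$ and comparing with $\int (x - t - d + 2)^{d-2}\,dt$ gives, after a shift killing the $d-2$ and $2^{d-1}\ge 1$ slack, a bound of the form $\frac{1}{(d-1)!}\bigl(\frac{1}{2}(x+y-r) - d\bigr)^{d-1}$, matching the statement.

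I expect the main obstacle to be purely bookkeeping: making sure the endpoints of the $j$-interval are integers-with-the-right-inequalities, that the argument $x - j - d + 2$ of the $(d-2)$-nd power stays nonnegative over the whole summation range (which may require discarding an $O(d)$ initial segment of $j$'s, and this is presumably where the ``$-d$'' in the final bound comes from), and that the integral comparison is done in the direction that under- rather than over-estimates. The geometric slicing identity and the choice of the sub-interval $r \le j \le \frac{1}{2}(x+y-r)$ are the conceptual core; everything after that is a one-variable estimate of the same flavour as the sums already handled in Lemma~\ref{lem:skinny-triangle} and Lemma~\ref{lem:d-ball-vol-area}.
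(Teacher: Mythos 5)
Your slicing strategy is genuinely different from the paper's argument (which places an $L_1$-ball of radius $\lfloor \tfrac{1}{2}(x+y-r) \rfloor$ at a point $M$ on a geodesic from $\underline{0}$ to $P$, observes that one face of that ball lies on $S$, and counts that face via Lemma~\ref{lem:d-ball-vol-area}), and the skeleton of your plan can be made to work; but as written it has two genuine gaps. The first is the reduction to $P=(r,0,\dots,0)$. This is not a symmetry: the $L_1$-isometries of $\mathbb{Z}^d$ fixing the origin are the signed coordinate permutations, which preserve the multiset of absolute values of the coordinates, so for instance $(1,1)$ cannot be moved to $(2,0)$, and $|S\cap B|$ genuinely depends on the direction of $P$, not only on $r$ (in $d=2$ with $x<r$, a diagonal $P$ picks up a whole face of the sphere, an axis $P$ only an arc of length about $x+y-r$). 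Your full-containment step relies crucially on the slice of $B$ at $v_1=j$ being a $(d-1)$-ball centred at the origin of the slice, which happens only when the remaining coordinates of $P$ vanish; for a general $P$ the slice is an off-centre ball, and you would need either an induction on $d$ (the slice intersection is again an instance of this very lemma one dimension down) or a proof that the axis position of $P$ is extremal. Neither is supplied, and this is the main missing idea.

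The second gap is the choice of the $j$-interval. For $j\ge r$ the containment condition $x-j\le y-(j-r)$ does not read $j\le\tfrac12(x+y-r)$; the $j$'s cancel and it reads $x\le y+r$. Moreover your proposed range $r\le j\le \tfrac12(x+y-r)$ is empty unless $x+y\ge 3r$, which fails exactly in the regime where the lemma is applied in Theorem~\ref{thm:d-dim-pos-dens} (there $x=\lfloor n^{(d+1)/d}\rfloor\le r$ and $y\le N$ is much smaller than $r$), and the asserted sweep of $x-j$ ``from $x-r$ down to $\tfrac12(x-y+r)$'' is then meaningless ($x-r<0$). The correct full-containment range is $\lceil\tfrac12(x+r-y)\rceil\le j\le x$ (assuming $x\le r+y$, which is needed anyway since otherwise $S\cap B=\emptyset$; the paper's proof tacitly needs this too): on it the slice-sphere radius $x-j$ runs from about $\tfrac12(x+y-r)$ down to $0$, and summing the bound $\tfrac{2^{d-1}}{(d-2)!}(x-j-d+2)^{d-2}$ over these $j$ does give $\tfrac{1}{(d-1)!}\bigl(\tfrac12(x+y-r)-d\bigr)^{d-1}$. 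So the method is salvageable once the interval is corrected, but the step as stated fails, and the unjustified reduction to an axis-aligned $P$ remains the substantive hole.
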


\begin{proof}
The proof is given by showing that the set $B_1 (\underline{0},x) \cap B$ contains a ball of radius $\left \lfloor \frac{1}{2} (x+y-r) \right \rfloor$, one face of which is contained in the set $S$. Toward this end, let $l$ be a path of length $r$ from $\underline{0}$ to $P$, and let $M$ be the vertex on $l$ at distance $\left \lceil \frac{1}{2} (r+x-y) \right \rceil$ from $\underline{0}$. 

Let $M'$ be a point on the same path with $d(\underline{0}, M') = x$ so that $M' \in S$.  Since $x, r$, and $y$ are all integers, $x+r-y \equiv x-r+y \pmod{2}$.  Since $r \leq x+y$, then $x \geq \lceil \frac{1}{2}(x+r-y)\rceil$ and so $d(M, M') = x - \lceil \frac{1}{2}(x+r-y)\rceil = \lfloor \frac{1}{2} (x-r+y) \rfloor$.  Also, since $d(M', P) = r-x \leq y$, then $M' \in B \cap S$.

Furthermore, since $x \leq r+y$, then $r \geq \lceil \frac{1}{2}(x+r-y)\rceil$ and so $d(M, P) = r - \lceil \frac{1}{2}(x+r-y)\rceil = \lfloor \frac{1}{2}(r-x+y)\rfloor$.

For any point $Q$ with $d(M, Q) \leq \lfloor \frac{1}{2}(x-r+y)\rfloor$, then
\[
d(\underline{0}, Q) \leq d(\underline{0}, M) + d(M, Q) \leq \left\lceil \frac{1}{2}(x+r-y) \right\rceil + \left\lfloor\frac{1}{2}(x-r+y)\right\rfloor = x
\]
and
\[
d(Q, P) \leq d(Q, M) + d(M, P) \leq \left\lfloor \frac{1}{2}(x-r+y) \right\rfloor + \left\lfloor \frac{1}{2}(r-x+y)\right\rfloor \leq y.
\]

It follows that $B_1 \left( M, \left \lfloor \frac{1}{2} (x+y-r) \right \rfloor \right) \subset B_1 (\underline{0},x) \cap B$. The vertex $M'$ is contained in a `face' of the ball $B_1 \left( M, \left \lfloor \frac{1}{2} (x+y-r) \right \rfloor \right)$ and of $S$.  Furthermore, any other vertex in the same face of this ball is also contained in the set $S$.  Since there are $2^d$ different faces of the ball, by Lemma~\ref{lem:d-ball-vol-area}
\begin{align*}
|S \cap B| &\geq \frac{1}{2^d} \frac{2^d}{(d-1)!} \left( \left \lfloor \frac{1}{2} (x+y-r) \right \rfloor -d+1 \right) ^{d-1} \\
&\geq \frac{1}{(d-1)!} \left( \frac{1}{2} (x+y-r) -d \right) ^{d-1},
 \end{align*}
and the proof follows. \end{proof}

Lemmas~\ref{lem:d-ball-vol-area} and \ref{lem:d-ball-sphere-intersec} are now used to prove that in the case that the grids do not grow too quickly, a random selection of activated vertices within particular sets gives a proportion of burning vertices bounded away from $0$ at any fixed time.  These random sequences are combined to give an infinite sequence with positive lower burning density.

\begin{theorem}\label{thm:d-dim-pos-dens}
For any integer $d \geq 2$, let $\underline{S}$ be the sequence of $d$-dimensional grids such that $S_n = \left [ - \lceil n^{(d+1)/d} \rceil, \lceil n^{(d+1)/d} \rceil \right ]^d$. Then there is a sequence of activated vertices $\underline{v}$ such that $\underline{\delta} (\underline{S},\underline{v}) >0$.
\end{theorem}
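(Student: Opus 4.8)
The plan is probabilistic: choose the activators at random on concentric $L_1$-spheres about the origin, show that for every sufficiently large $N$ the burned set $B_N$ contains, with very high probability, a fixed positive proportion of $S_N$, and then extract a single valid deterministic sequence with positive lower burning density. Fix a constant $c \in (0,1]$ and a large integer $k_0 = k_0(c,d)$, put $r_k = \lceil c k^{(d+1)/d} \rceil$, and define $\underline{v}$ by $v_0 = \underline{0}$, $v_k = \bullet$ for $1 \le k < k_0$, and, for $k \ge k_0$, let $v_k$ be uniformly random in $S_1(\underline{0}, r_k)$, the choices being independent over $k$. Since $c \le 1$, $\|v_k\|_\infty \le r_k \le \lceil k^{(d+1)/d} \rceil$, so $v_k \in V(S_k)$. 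To check validity regardless of the draws, observe that $B_{k-1}$ is contained in $B_1(\underline{0}, k-1) \cup \bigcup_{k_0 \le j < k} B_1(v_j, k-1-j)$, and using convexity of $t \mapsto t^{(d+1)/d}$ (derivative $\tfrac{d+1}{d}t^{1/d}$) one gets, for $k \ge k_0$ with $k_0$ large enough, both $r_k \ge (k-1)+2$ and $r_k - r_j - (k-1-j) \ge (k-j)\bigl(c\tfrac{d+1}{d}j^{1/d} - 1\bigr) \ge 2$; hence every point of $S_1(\underline{0}, r_k)$ is at distance at least $2$ from $B_{k-1}$ and therefore lies outside $N_{S_k}[B_{k-1}]$, so $\underline{v}$ is valid.

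The heart of the proof is the claim that there is a constant $c' = c'(c,d) \in (0,1]$ so that, writing $R_N = \lfloor c' N^{(d+1)/d} \rfloor$ and $\alpha = \tfrac{1}{d(d+1)}$, for all large $N$ one has $B_1(\underline{0}, R_N) \subseteq B_N$ with probability at least $1 - N^{O(1)}\exp(-\Omega(N^{\alpha}))$. To prove it, fix a vertex $P$ at $L_1$-distance $\rho \le R_N$ from the origin. If $\rho \le N$ then $P \in B_1(v_0, N) \subseteq B_N$ (as $v_0$ is activated at time $0$ and the grids are nested). So assume $N < \rho \le R_N$ and let $K(\rho) = \{k : r_k \in [\rho, 2\rho]\}$; since $\rho > N$, each such $k$ satisfies $k \ge k_0$ for $N$ large, and taking $c'$ small enough also forces $k \le N/2$, so $N-k \ge N/2$. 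For $k \in K(\rho)$, uniformity of $v_k$ on $S_1(\underline{0}, r_k)$ gives $\Pr[P \in B_1(v_k, N-k)] = |S_1(\underline{0}, r_k) \cap B_1(P, N-k)| / |S_1(\underline{0}, r_k)|$ (since $P \in B_1(v_k, N-k) \iff v_k \in B_1(P, N-k)$). The hypotheses of Lemma~\ref{lem:d-ball-sphere-intersec} hold ($\rho \le r_k + (N-k)$ because $r_k \ge \rho$; $\rho + r_k \ge N-k$ because $\rho > N$), so its conclusion, with $r_k \ge \rho$ and $N-k \ge N/2$, bounds the numerator below by $\tfrac{1}{(d-1)!}(N/4 - d)^{d-1}$, while Lemma~\ref{lem:d-ball-vol-area} bounds the denominator above by $\tfrac{2^d}{(d-1)!}(2R_N + d-1)^{d-1}$. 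Hence $\Pr[P \in B_1(v_k, N-k)] \ge p$ for some $p = \Omega(N^{-(d-1)/d})$, uniformly over $k \in K(\rho)$.

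Since $r_k$ grows like $c k^{(d+1)/d}$, the band $r_k \in [\rho, 2\rho]$ pulls back to at least $\Omega((\rho/c)^{d/(d+1)}) = \Omega(N^{d/(d+1)})$ values of $k$. The events $\{P \notin B_1(v_k, N-k)\}$, $k \in K(\rho)$, are independent, so $\Pr[P \notin B_N] \le (1-p)^{|K(\rho)|} \le \exp(-p\,|K(\rho)|) = \exp(-\Omega(N^{\alpha}))$, using $\tfrac{d}{d+1} - \tfrac{d-1}{d} = \tfrac{1}{d(d+1)} = \alpha$. A union bound over the at most $(2R_N+1)^d = N^{O(1)}$ vertices of $B_1(\underline{0}, R_N)$ proves the claim.

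To finish, note $\sum_N N^{O(1)}\exp(-\Omega(N^{\alpha})) < \infty$, so by Borel--Cantelli the claim fails for only finitely many $N$ almost surely; thus almost surely $|B_N| \ge |B_1(\underline{0}, R_N)| \ge \tfrac{1}{2^d d^d}(2R_N+1)^d$ for all large $N$ (the last step by the first part of Lemma~\ref{lem:d-ball-vol-area}, valid once $R_N \ge d+1$), and dividing by $|V(S_N)| = (2\lceil N^{(d+1)/d}\rceil + 1)^d$ and letting $N \to \infty$ shows $\underline{\delta}(\underline{S}, \underline{v}) \ge \tfrac{(c')^d}{2^d d^d} > 0$; in particular a valid sequence with positive lower burning density exists. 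The main obstacle is calibrating the radius $R_N$: it must be small enough that the $\Omega(N^{d/(d+1)})$ useful activators all act by time $N/2$ and each still hits a prescribed far point with probability $\Omega(N^{-(d-1)/d})$, yet any positive constant $c'$ already forces a positive density — and it is exactly the growth rate $N^{(d+1)/d}$ that makes the product of these two quantities diverge, which is what lets the union bound over polynomially many points go through.
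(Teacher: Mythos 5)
Your setup (random activators on the spheres $S_1(\underline{0},r_k)$, validity check, and the final Borel--Cantelli/averaging step) is fine, but the quantitative heart of the argument has a genuine gap: you apply Lemma~\ref{lem:d-ball-sphere-intersec} outside its range of validity. The two hypotheses stated in that lemma do hold in your situation, but the lemma's proof places a point $M'$ at distance $x$ from the origin on a geodesic of length $r$, so it implicitly needs $x\le r$ (and its conclusion is simply false when $x>r+y$, since then $S_1(\underline{0},x)$ and $B_1(P,y)$ are disjoint while the claimed lower bound is large); the paper only ever invokes it with sphere radius at most $d(\underline{0},P)$. In your application $x=r_k$ ranges over $[\rho,2\rho]$ while $y=N-k\le N$ and $\rho$ may be as large as $c'N^{(d+1)/d}\gg N$, so for all but the few $k$ with $r_k\le\rho+(N-k)$ the sphere $S_1(\underline{0},r_k)$ does not even reach $B_1(P,N-k)$ and $\mathbb{P}[v_k\in B_1(P,N-k)]=0$, not $\Omega(N^{-(d-1)/d})$.

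This is not a repairable slip within your route. The genuinely useful indices are those with $r_k\in[\rho,\rho+(N-k)]$, and since consecutive radii are spaced $\Theta(k^{1/d})=\Theta(\rho^{1/(d+1)})$ apart, there are only $\Theta(N/\rho^{1/(d+1)})$ of them, which at $\rho\asymp N^{(d+1)/d}$ is $\Theta(N^{(d-1)/d})$, not your $\Omega(N^{d/(d+1)})$. Multiplying by $p=\Theta(N^{-(d-1)/d})$ gives $p\,|K(\rho)|=\Theta(1)$: the per-point failure probability is a constant bounded away from $0$, not $\exp(-N^{\alpha})$, so the union bound over $N^{O(1)}$ vertices and the Borel--Cantelli step both collapse. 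Your closing heuristic is exactly backwards at this growth rate: the product of the two quantities does not diverge at $n^{(d+1)/d}$, it is $\Theta(1)$ (it would diverge only for slower-growing grids), and the paper's own computation (a window of $\Theta(N^{1-1/d})$ activators, each succeeding with probability $\Omega(N^{-(1-1/d)})$) confirms this. That is precisely why the paper settles for a constant per-point burn probability, converts it by expectation into a deterministic block of activators burning a constant fraction of an annulus at a single time $N$, and then concatenates such blocks at geometrically spaced times $N_{i+1}=3N_i$, paying a factor $3^{d+1}$ to control the density at all intermediate times. To salvage a single-random-sequence argument you would need something replacing the union bound (e.g.\ a second-moment or correlation argument), or else adopt the paper's block structure.
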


\begin{proof}
Let $N \geq \max\left\{2\left(\frac{3d}{d+1}\right)^d, 2^{d+2}, 16d\right\}$ be large and consider the sequence of grids $(S_n)_{n = 1}^N$.  Vertices are activated at random in the following way: for every time-step $n \in \left [ \lfloor N/6 \rfloor +1, \lfloor N/2 \rfloor \right ]$, choose one vertex at distance $\left \lfloor n^{(d+1)/d} \right \rfloor$ from the origin uniformly at random to be the vertex $v_n$ activated at time $n$ and make this choice independently of all others. Note that for $n \geq 1$, $(n+1)^{(d+1)/d} \geq n^{(d+1)/d} +1$ and so these chosen vertices are all distinct.

We will then show that there is a constant $\lambda_d >0$, depending only on $d$, such that with positive probability the fraction of burning vertices in $S_N$ at time $N$ is at least $\lambda_d$.  While the proof that such a constant $\lambda_d>0$ exists is probabilistic, we then deterministically concatenate infinitely many such sequences of activated vertices to obtain the sequence of activated vertices for $(S_n)_{n \geq 1}$.  This sequence will have \emph{upper} burning density at least $\lambda_d$ and, by choosing the `time scales' appropriately, we will show that the lower burning density is still at least $\lambda_d / 3^{d+1}$.

We begin by computing the expected number of burned vertices whose distance from the origin is in the range $\left [ (N/4)^{(d+1)/d},(N/2)^{(d+1)/d} \right ]$. For any $(N/4)^{(d+1)/d} \leq r \leq (N/2)^{(d+1)/d}$, let $P$ be any vertex at distance $r$ from the origin and consider the probability that $P$ is burned by time $N$.  In fact, only activated vertices $v_n$ with $n \leq r^{d/(d+1)}$ and $n \geq r^{d/(d+1)} - \frac{2^{1/d} d}{4(d+1)} N^{1-1/d}+1$ are considered as sources for a fire to burn $P$. Note that for this choice of $n$,
\begin{align}
\lfloor n^{(d+1)/d} \rfloor
	&\geq n^{(d+1)/d} - 1 \notag\\
	&\geq \left(r^{d/(d+1)} - \frac{2^{1/d} d}{4(d+1)} N^{1-1/d}+1\right)^{(d+1)/d} - 1 \notag\\
	&=r \left(1 - \frac{2^{1/d} d}{4(d+1)r^{d/(d+1)}} N^{1-1/d} + \frac{1}{r^{d/(d+1)}} \right)^{(d+1)/d} - 1 \notag\\
	&\geq r\left(1 - \frac{d+1}{d} \frac{2^{1/d} d}{4(d+1)r^{d/(d+1)}} N^{1-1/d}  + \frac{d+1}{d}\frac{1}{r^{d/(d+1)}} \right) - 1 \notag\\
	&= r - \frac{2^{1/d}}{4} N^{1 - 1/d} r^{1/(d+1)} + \frac{d+1}{d} r^{1/(d+1)} - 1 \notag\\
	&\geq r -  \frac{2^{1/d}}{4} N^{1 - 1/d} \left(\frac{N}{2}\right)^{1/d} + \frac{(d+1)}{d}\left(\frac{N}{4}\right)^{1/d} - 1 \notag\\
	&\geq r - \frac{N}{4} \label{eq:n-lb}
\end{align}

For any $n \leq r^{d/(d+1)}$, $P$ is burned by $v_n$ by time $N$ if{f} $d(P, v_n) \leq N-n$.  Thus, for $n \leq r^{d/(d+1)}$, if $d(P, v_n) \leq N-r^{d/(d+1)}$, then $P$ is burned by time $N$.  Since activated vertices are chosen uniformly at random, the probability that $v_n$ burns $P$ by time $N$ is, 
\begin{multline}\label{eq:prob-random-burn}
\mathbb{P} (P \text{ is burned by time } N \text{ by } v_n ) \\
\geq \frac{\left | S_1 \left( \underline{0},\left \lfloor n^{(d+1)/d} \right \rfloor \right) \cap B_1 \left( P,N-r^{d/(d+1)} \right) \right |}{\left | S_1 \left( \underline{0},\left \lfloor n^{(d+1)/d} \right \rfloor \right) \right |}.
\end{multline}

Lemma~\ref{lem:d-ball-sphere-intersec} is used to bound the numerator of Equation~\eqref{eq:prob-random-burn}.  To verify the conditions of Lemma~\ref{lem:d-ball-sphere-intersec}, note that
\begin{align*}
\lfloor n^{(d+1)/d} \rfloor + (N-r^{d/(d+1)}) 
	&\geq r - \frac{N}{4} + N - r^{d/(d+1)}\\
	&\geq r + \frac{3N}{4} - \frac{N}{2} = r + \frac{N}{4} > r
\end{align*}
where the first inequality follows from \eqref{eq:n-lb} and the second from the upper bound for $r$.

Again using \eqref{eq:n-lb},
\begin{align*}
r+ \lfloor n^{(d+1)/d} \rfloor &\geq r + r - \frac{N}{4} = 2r - \frac{N}{4}\\
		&\geq 2\left(\frac{N}{4}\right)^{(d+1)/d} - r^{d/(d+1)}\\
		&\geq N - r^{d/(d+1)}
\end{align*}
where the second inequality follows from the lower bound on $r$ and the third inequality follows since $N \geq 2^{d+2}$.

Therefore, by Lemma~\ref{lem:d-ball-sphere-intersec}, the numerator of \eqref{eq:prob-random-burn} is
\begin{align}
| S_1 &\left( \underline{0},\left \lfloor n^{(d+1)/d} \right \rfloor \right) \cap B_1 \left( P,N-r^{d/(d+1)} \right) | \notag\\
	& \geq \frac{1}{(d-1)!}\left( \frac{1}{2}(\lfloor n^{(d+1)/d} \rfloor + N -r^{d/(d+1)} - r) - d\right)^{d-1} \notag\\
	& \geq \frac{1}{(d-1)!}\left(\frac{1}{2}(-N/4 + N - r^{d/(d+1)}) - d\right)^{d-1} \notag\\
	& \geq \frac{1}{(d-1)!}\left(\frac{1}{2}(3N/4 - N/2) -d\right)^{d-1}\notag\\
	& = \frac{1}{(d-1)!} \left(\frac{N}{8} - d\right)^{d-1} \geq \frac{1}{(d-1)!}\left(\frac{N}{16}\right)^{d-1} \label{eq:num-lb}
\end{align} 
where the second inequality follows from equation~\eqref{eq:n-lb} and the final inequality follows since $N \geq 16d$.

For the denominator of \eqref{eq:prob-random-burn}, Lemma~\ref{lem:d-ball-vol-area} is used for an upper bound
\begin{align}
|S_1(\underline{0}, \lfloor n^{(d+1)/d} \rfloor)|
	&\leq \frac{2^d}{(d-1)!} (\lfloor n^{(d+1)/d} \rfloor + d-1)^{d-1} \notag\\
	&\leq \frac{2^d}{(d-1)!} (2 \lfloor n^{(d+1)/d} \rfloor)^{d-1} \notag\\
	&\leq \frac{2^d}{(d-1)!}\left(2 (N/2)^{(d+1)/d}\right)^{d-1} \notag\\
	&= \frac{2^{d-1+1/d} N^{d-1/d}}{(d-1)!}. \label{eq:denom-ub}
\end{align}

Combining Equations~\eqref{eq:prob-random-burn}, \eqref{eq:num-lb}, and \eqref{eq:denom-ub} gives
\begin{equation}\label{eq:burn-prob-lb}
\mathbb{P} (P \text{ is burned by time } N \text{ by } v_n ) \geq \frac{\frac{1}{(d-1)!}(N/16)^{d-1}}{\frac{2^{d-1+1/d}}{(d-1)!}N^{d-1/d}}
	=\frac{1}{2^{5d-5+1/d} N^{1-1/d}}.
\end{equation}

Therefore, setting $l= r^{d/(d+1)} - \frac{d2^{1/d}}{4(d+1)} N^{1-1/d}$, the probability that $P$ is not burned by time $N$ by $v_n$ with $n \in [l+1, r^{d/(d+1)}]$ is
\begin{align*}
\mathbb{P} &( P \text{ is not burned by time } N)\\
 & \leq \prod_{n=l+1}^{r^{d/(d+1)}} \mathbb{P} (P \text{ is not burned by time } N \text{ by } v_n ) \\
 & \leq \prod_{n = l+1}^{r^{d/(d+1)}} \left(1 - \frac{1}{2^{5d-5+1/d} N^{1-1/d}}\right) \\
 & = \left(1 - \frac{1}{2^{5d-5+1/d} N^{1-1/d}}\right)^{r^{d/(d+1)} - l}\\
 & = \left(1 - \frac{1}{2^{5d-5+1/d} N^{1-1/d}}\right)^{ \frac{d2^{1/d}}{4(d+1)} N^{1-1/d}}\\
 & \leq \exp\left(- \frac{1}{2^{5d-5+1/d} N^{1-1/d}}\right)^{ \frac{d2^{1/d}}{4(d+1)} N^{1-1/d}}\\
 &=\exp\left(- \frac{1}{2^{5d-5+1/d} N^{1-1/d}} \cdot \frac{d2^{1/d}}{4(d+1)} N^{1-1/d} \right)\\
 &=\exp\left(-\frac{d}{(d+1) 2^{5d-3}} \right) < 1,
\end{align*}
where the second inequality follows by \eqref{eq:burn-prob-lb}.

Since this bound depends only on $d$, we have the desired $\lambda_d$. To construct an infinite sequence with positive lower density from the existing finite sequences, let $N_1 > \max\left\{2\left(\frac{3d}{d+1}\right)^d, 2^{d+2}, 16d\right\}$ and for every $i \geq 1$, set $N_{i+1} = 3 N_{i}$. For every $i \geq 1$, let $\underline{v}_i$ be a sequence of activated vertices given by the probabilistic proof above.  That is, for every $n \in [\lfloor N_i/6 \rfloor + 1, \lfloor N_i/2 \rfloor]$, $(\underline{v}_i)_n$ is a vertex in $S_{N_i}$ so that at time $N_i$, the fraction of vertices in $S_{N_i}$ burning (because of activated vertices in $\underline{v}_i$) is at least $\lambda_d$.  Construct an infinite sequence of activated vertices $\underline{v}$ by concatenating the sequences $(\underline{v}_i)_{i \geq 1}$ (selecting activated vertices arbitrarily for all times before $N_1/6$). Since
\[
\frac{N_{i+1}}{6} +1 = \frac{3N_i}{6} + 1 > \frac{N_i}{2},
 \]
there is no overlap between the sequences $\underline{v}_i$. 

The construction guarantees that the burning density is at least $\lambda_d$ at time $N_i$ for all $i$. Since $\frac{N_{i+1}}{N_i} =3$, for any $t \leq 2N_i$, the burning density in $S_{N_i + t}$ at time $N_i + t$ is at least
\[
\frac{\lambda_d (2N_i^{(d+1)/d}+1)^d}{(2(N_i+t)^{(d+1)/d} +1)^d} \geq \lambda_d \left(\frac{2N_i^{(d+1)/d} +1}{2(3N_i)^{(d+1)/d} + 1}\right)^d \geq \lambda_d \left(\frac{1}{3^{(d+1)/d}}\right)^d =  \frac{\lambda_d}{3^{d+1}}.
\]
Therefore, the minimum burning density at \emph{any} time is at least $\lambda_d/3^{d+1}$ and so $\underline{\delta}(\underline{S}, \underline{v}) \geq \lambda_d/3^{d+1}$.
\end{proof}

\section{Further directions}\label{sec:open}

One possible variation on the burning density is the following. Let $(G_0, G_1, \ldots)$ be a sequence of graphs with the property that for every $n \geq 1$, $G_{n-1}$ is an induced subgraph of $G_n$.  A sequence of activated vertices, $\underline{v}$, is a \emph{connected burning} sequence if for every $n \geq 0$ with $v_n \neq \bullet$, $v_n$ is adjacent to a vertex in $N_{G_n}[B_n]$.

In a connected burning sequence, an activated vertex is always chosen adjacent to an already burning vertex and so the set of burned vertices is always a connected subgraph. Which burning densities are achievable by connected burning sequences?   The sequences of activated vertices given by Lemmas~\ref{lem:density0} and~\ref{lem:skinny-triangle} are always connected burning sequences.  This shows, as in the proof of Theorem~\ref{thm:growth-by-one}, that in the sequence of grids $([-n, n]^2)_{n \geq 0}$, connected burning sequences can achieve any density in $[1/2, 5/8]$.  Are any other values achievable by connected burning sequences?

With regard to burning in higher dimensional grids, it would be interesting to determine a meaningful condition on grid-size growth that determines whether or not positive burning density is achievable. We have made such determinations concerning the ``threshold'' $n^{(d+1)/d}$, but what about growth functions that are not well-behaved or even strictly increasing? Certain considerations obstruct a straightforward generalization; for example, the burning density need not always exist. Indeed, take $f(n) = 2^{\lfloor \log n \rfloor}$. Then if $n = 2^k -1$, $f(n) = 2^{k-1}$ and any vertex within distance $2^k -1$ of the origin is burned at time $n$, leaving at most $4$ vertices unburned (the corners of the grid). However, if $n = 2^k$, then $f(n) = 2^k$ and at most $\left( 2^{k-1} +1 \right) ^2$ vertices are burned at time $n$. In the former case, the burning density is at least $1 - o(1)$, while in the later it is at most $\frac{1}{4} + o(1)$. Thus, the burning density does not exist.

A similar result follows when $f(n)$ is of maximum order. For this, let $f(n) = \left \lfloor \left( 2^{2^{\lfloor \log \log n \rfloor}} \right) ^{3/2} \right \rfloor$. If $n = 2^{2^k} -1$, then $f(n) = \left \lfloor \left( 2^{2^{k-1}} \right) ^{3/2} \right \rfloor$ and every vertex in the graph is burned. If $n = 2^{2^k}$, then $f(n) = \left \lfloor \left( 2^{2^k} \right) ^{3/2} \right \rfloor$ and at most $\left( 2^{2^{k-1}} +1 \right) ^3$ vertices are burned. Hence, the upper burning density is $1$, and since
\[
\lim_{k \rightarrow \infty} \left( \frac{2^{2^{k-1}}}{2^{2^k}} \right) ^3 = 0
\]
the lower burning density is $0$.

Also notice that it is not merely the order of the graphs that determines whether positive density is achievable. We showed that grids of order $\left( n^{3/2} \right) ^2 = n^3$ can yield positive lower density. However, even graphs of order $n^{2+\epsilon}$ in $\mathbb{Z} ^2$ may force density $0$. This is simply because ``thin'' grids mimic the $1$-dimensional model, in which the threshold for positive densities is $n^2$. For example, let the graph sequence be defined by $S_n = [0, n^{\varepsilon/2}] \times [0, n^{2+\varepsilon/2}]$. Then at arbitrary time $N$, we have that the number of burned vertices is at most
\[
\sum_{k=1}^N 2k n^{\varepsilon/2} \leq N^2 \cdot N^{\varepsilon/2} = o(|S_N|) .
\]
Thus, it is necessary to consider the structure of the grids comprising the graph sequence.

As with the firefighting problem, one may also study graph burning on other infinite lattices. Further, what happens to the burning density when the sequence of grids is not symmetric about the origin?

\section*{Acknowledgements}
The authors would like to thank the referee for their very careful reading of the paper and the many helpful suggestions which greatly improved a number of the proofs.


\begin{thebibliography}{}
%
%
\bibitem{benevides1}
{\sc F.~S. Benevides and M.~Przykucki}, {\em On slowly percolating sets of
  minimal size in bootstrap percolation}, Electron. J. Combin., 20 (2013),
  pp.~1--20.

\bibitem{benevides2}
{\sc F.~S. Benevides and M.~Przykucki}, {\em Maximum percolation time in
  two-dimensional bootstrap percolation}, SIAM J. Discrete Math., 29 (2015),
  pp.~224--251.

\bibitem{BBJRR17}
{\sc S.~Bessy, A.~Bonato, J.~Janssen, D.~Rautenbach, and E.~Roshanbin}, {\em
  Burning a graph is hard}, Discrete Appl. Math., 232 (2017), pp.~73--87.

\bibitem{BBJRR18}
{\sc S.~Bessy, A.~Bonato, J.~Janssen, D.~Rautenbach, and E.~Roshanbin}, {\em
  Bounds on the burning number}, Discrete Appl. Math., 235 (2018), pp.~16--22.

\bibitem{BJR14}
{\sc A.~Bonato, J.~Janssen, and E.~Roshanbin}, {\em Burning a graph as a model
  of social contagion}, Lecture Notes in Comput. Sci., 8882 (2014), pp.~13--22.

\bibitem{BJR16}
{\sc A.~Bonato, J.~Janssen, and E.~Roshanbin}, {\em How to burn a graph},
  Internet Math., 12 (2016), pp.~85--100.

\bibitem{BK}
{\sc A.~Bonato and S.~Kamali}, {\em Approximation and algorithms for burning
  graphs}.
\newblock Preprint, 2018.

\bibitem{BL}
{\sc A.~Bonato and T.~Lidbetter}, {\em Bounds on the burning numbers of spiders
  and path-forests}.
\newblock To appear.

\bibitem{BGPS06}
{\sc S.~Boyd, A.~Ghosh, B.~Prabhakar, and D.~Shah}, {\em Randomized gossip
  algorithms}, IEEE Trans. Inform. Theory, 52 (2006), pp.~2508--2530.

\bibitem{CW}
{\sc L.~Cai and W.~Wang}, {\em The surviving rate of a graph for the
  firefighter problem}, SIAM J. Discrete Math., 23 (2009), pp.~1814--1826.

\bibitem{finbow}
{\sc S.~Finbow and G.~MacGillivray}, {\em The firefighter problem: A survey of
  results, directions and questions}, Australas. J. Combin., 43 (2009),
  pp.~57--77.

\bibitem{shannon}
{\sc S.~L. Fitzpatrick and L.~Wilm}, {\em Burning circulant graphs}.
\newblock Preprint, 2018.

\bibitem{granovetter}
{\sc M.~Granovetter}, {\em Threshold models of collective behavior}, Am. J.
  Sociol.,  (1978), pp.~1420--1443.

\bibitem{HW65}
{\sc J.~M. Hammersley and D.~J.~A. Welsh}, {\em First-passage percolation,
  subadditive processes, stochastic networks, and generalized renewal theory},
  in Bernoulli 1713 Bayes 1763 Laplace 1813, J.~Neyman and L.~M. LeCam, eds.,
  Springer, Berlin, Heidelberg, 1965, pp.~61--110.

\bibitem{tH74}
{\sc T.~E. Harris}, {\em Contact interactions on a lattice}, Ann. Probab., 2
  (1974), pp.~969--988.

\bibitem{hartnell}
{\sc B.~L. Hartnell}, {\em Firefighter! an application of domination}.
\newblock Presentation at 25th Manitoba Conference on Combinatorial Mathematics
  and Computing, University of Manitoba in Winnipeg, MB, 1995.

\bibitem{Kramer14}
{\sc A.~D.~I. Kramer, J.~E. Guillory, and J.~T. Hancock}, {\em Experimental
  evidence of massive-scale emotional contagion through social networks}, Proc.
  Natl. Acad. Sci. USA, 111 (2014), pp.~8788--8790.

\bibitem{LL}
{\sc M.~Land and L.~Lu}, {\em An upper bound on the burning number of graphs},
  in Algorithms and models for the web graph, Springer, Cham, 2016, pp.~1--8.

\bibitem{rburn}
{\sc D.~Mitsche, P.~Pra\l{}at, and E.~Roshanbin}, {\em Burning graphs---a
  probabilistic perspective}, Graphs Combin., 33 (2017), pp.~449--471.

\bibitem{przykucki}
{\sc M.~Przykucki}, {\em Maximal percolation times in hypercubes under
  2-bootstrap percolation}, Electron. J. Combin., 19 (2012), pp.~1--13.

\bibitem{dR73}
{\sc D.~Richardson}, {\em Random growth in a tessellation}, Math. Proc.
  Cambridge Philos. Soc., 74 (1973), pp.~515--528.

\bibitem{thez}
{\sc E.~Roshanbin}, {\em Burning a graph as a model of social contagion}, PhD
  thesis, Dalhousie University, Halifax, NS, 2016.

\bibitem{schelling}
{\sc T.~Schelling}, {\em Micromotives and Macrobehavior}, Norton, New York, NY,
  1978.



\end{thebibliography}


\end{document}